\theoremstyle{plain}
\newtheorem{theorem}{Theorem}
\newtheorem{lemma}{Lemma}
\newtheorem{corollary}{Corollary}
\def\R{\mathbb{R}}
\newcommand{\e}{\mathrm{e}}
\newcommand{\var}{\mathrm{var}}
\newcommand{\pdim}{\ensuremath{p}}
\newcommand{\kdim}{\ensuremath{k}}
\newcommand{\numobs}{\ensuremath{n}}
\newcommand{\spgam}{\ensuremath{\gamma}}
\newcommand{\Numset}{\ensuremath{N}}
\newcommand{\defn}{\ensuremath{: \, =}}
\newcommand{\Sset}{\ensuremath{S}}
\newcommand{\Uset}{\ensuremath{U}}
\newcommand{\Shat}{\ensuremath{\widehat{\Sset}}}
\newcommand{\betahat}{\ensuremath{\widehat{\beta}}}
\newcommand{\mprob}{\ensuremath{\mathbb{P}}}
\newcommand{\Exs}{\ensuremath{\mathbb{E}}}
\newcommand{\betamin}{\ensuremath{\beta_{min}}}
\newcommand{\minval}{\betamin}
\long\def\comment#1{}
\newcommand{\myparagraph}[1]{\noindent {\bf{#1}}}
\newcommand{\real}{\ensuremath{\mathbb{R}}}
\newcommand{\Bin}{\ensuremath{\operatorname{Bin}}}
\newcommand{\Ber}{\ensuremath{\operatorname{Ber}}}
\newcommand{\MYLOGNUM}{\ensuremath{\log {\pdim \choose \kdim}-1}}
\newcommand{\myboundone}{\ensuremath{f_1(\pdim, \kdim, \minval)}}
\newcommand{\myboundtwo}{\ensuremath{f_2(\pdim, \kdim, \minval)}}
\newcommand{\spboundone}{\ensuremath{g_1(\pdim, \kdim, \minval, \spgam)}}
\newcommand{\spboundtwo}{\ensuremath{g_2(\pdim, \kdim, \minval, \spgam)}}
\newcommand{\pastSNR}{\ensuremath{\operatorname{SNR}}}
\newcommand{\Ytil}{\ensuremath{\widetilde{Y}}}
\newcommand{\betatil}{\ensuremath{\widetilde{\beta}}}
\newcommand{\Ybar}{\ensuremath{Y}}
\newcommand{\betabar}{\ensuremath{\bar{\beta}}}
\newcommand{\myprob}[1]{\ensuremath{\frac{#1}{{\pdim \choose \kdim}}}}
\newcommand{\Hbin}{\ensuremath{H_{binary}}}
\newcommand{\jstar}{\ensuremath{j^\star}}
\newcommand{\SigClass}{\ensuremath{\mathcal{C}}}
\newcommand{\psibar}{\ensuremath{\overline{\psi}}}
\def\@cite#1#2{[\if@tempswa #2 \fi #1]}
\long\def\@makecaption#1#2{
        \vskip 0.8ex
        \setbox\@tempboxa\hbox{\small {\bf #1:} #2}
        \parindent 1.5em  
        \dimen0=\hsize
        \advance\dimen0 by -3em
        \ifdim \wd\@tempboxa >\dimen0
                \hbox to \hsize{
                        \parindent 0em
                        \hfil 
                        \parbox{\dimen0}{\def\baselinestretch{0.96}\small
                                {\bf #1.} #2
                                } 
                        \hfil}
        \else \hbox to \hsize{\hfil \box\@tempboxa \hfil}
        \fi
        }
\begin{document}

\begin{center}
{\bf{\LARGE{Information-theoretic limits on sparse signal recovery:\\
Dense versus sparse measurement matrices}}}

\vspace*{.3in}

{\large{
\begin{tabular}{ccccc}
Wei Wang$^\star$ & & Martin J. Wainwright$^{\dagger,\star}$ & &
Kannan Ramchandran$^\star$
\end{tabular}
\begin{tabular}{c}
{\texttt{$\{$wangwei, wainwrig, kannanr$\}$@eecs.berkeley.edu}}
\end{tabular}
}}

\vspace*{.2in}

\begin{tabular}{c}
Department of Electrical Engineering and Computer Sciences$^\star$, and \\
Department of Statistics$^\dagger$ \\
University of California, Berkeley
\end{tabular}

\vspace*{.2in}

\begin{tabular}{c}
Technical Report \\
Department of Statistics,  UC Berkeley \\
May 2008
\end{tabular}

\vspace*{.2in}

\begin{abstract}
We study the information-theoretic limits of exactly recovering the
support of a sparse signal using noisy projections defined by various
classes of measurement matrices.  Our analysis is high-dimensional in
nature, in which the number of observations $\numobs$, the ambient
signal dimension $\pdim$, and the signal sparsity $\kdim$ are all
allowed to tend to infinity in a general manner. This paper makes two
novel contributions.  First, we provide sharper necessary conditions
for exact support recovery using general (non-Gaussian) dense
measurement matrices.  Combined with previously known sufficient
conditions, this result yields sharp characterizations of when the
optimal decoder can recover a signal for various scalings of the
sparsity $\kdim$ and sample size $\numobs$, including the important
special case of linear sparsity ($\kdim = \Theta(\pdim)$) using a
linear scaling of observations ($\numobs = \Theta(\pdim)$).  Our
second contribution is to prove necessary conditions on the number of
observations $\numobs$ required for asymptotically reliable recovery
using a class of $\spgam$-sparsified measurement matrices, where the
measurement sparsity $\spgam(\numobs, \pdim, \kdim) \in (0,1]$
corresponds to the fraction of non-zero entries per row.  Our analysis
allows general scaling of the quadruplet $(\numobs, \pdim, \kdim,
\spgam)$, and reveals three different regimes, corresponding to
whether measurement sparsity has no effect, a minor effect, or a
dramatic effect on the information-theoretic limits of the subset
recovery problem.
\end{abstract}

\end{center}

\myparagraph{Keywords:} Sparsity recovery; sparse random matrices;
subset selection; compressive sensing; signal denoising; sparse
approximation; information-theoretic bounds; Fano's inequality.


\section{Introduction}
\label{SecIntro}

The problem of estimating a $\kdim$-sparse vector $\beta \in \R^\pdim$
based on a set of $\numobs$ noisy linear observations is of broad
interest, arising in subset selection in regression, graphical model
selection, group testing, signal denoising, sparse approximation, and
compressive sensing.  A large body of recent work
(e.g.,~\cite{Chen98,Donoho06,DonElTem06,CandesTao05,CanRomTao06,Meinshausen06,Tibshirani96,Tropp06,Wainwright06a,
Gil06,ComMut05,WaGaRa07, XuHa07,SaBaBa06a}) has analyzed the use of
$\ell_1$-relaxation methods for estimating high-dimensional sparse
signals, and established conditions (on signal sparsity and the choice
of measurement matrices) under which they succeed with high
probability.

Of complementary interest are the information-theoretic limits of the
sparsity recovery problem, which apply to the performance of any
procedure regardless of its computational complexity.  Such analysis
has two purposes: first, to demonstrate where known polynomial-time
methods achieve the information-theoretic bounds, and second, to
reveal situations in which current methods are sub-optimal.  An
interesting question which arises in this context is the effect of the
choice of measurement matrix on the information-theoretic limits of
sparsity recovery.  As we will see, the standard Gaussian measurement
ensemble is an optimal choice in terms of minimizing the number of
observations required for recovery.  However, this choice produces
highly dense measurement matrices, which may lead to prohibitively
high computational complexity and storage requirements.  Sparse
matrices can reduce this complexity, and also lower communication cost
and latency in distributed network and streaming applications.  On the
other hand, such measurement sparsity, though beneficial from the
computational standpoint, may reduce statistical efficiency by
requiring more observations to decode.  Therefore, an important issue
is to characterize the trade-off between measurement sparsity and
statistical efficiency.

With this motivation, this paper makes two contributions.  First, we
derive sharper necessary conditions for exact support recovery,
applicable to a general class of dense measurement matrices (including
non-Gaussian ensembles).  In conjunction with the sufficient
conditions from previous work~\cite{Wainwright06_info}, this analysis
provides a sharp characterization of necessary and sufficient
conditions for various sparsity regimes.  Our second contribution is
to address the effect of measurement sparsity, meaning the fraction
$\spgam \in (0,1]$ of non-zeros per row in the matrices used to
collect measurements.  We derive lower bounds on the number of
observations required for exact sparsity recovery, as a function of
the signal dimension $\pdim$, signal sparsity $\kdim$, and measurement
sparsity $\spgam$.  This analysis highlights a trade-off between the
statistical efficiency of a measurement ensemble and the computational
complexity associated with storing and manipulating it.

The remainder of the paper is organized as follows.  We first define
our problem formulation in Section~\ref{SecProblem}, and then discuss
our contributions and some connections to related work in
Section~\ref{SecRelated}.  Section~\ref{SecMain} provides precise
statements of our main results, as well as a discussion of their
consequences.  Section~\ref{SecProof} provides proofs of the necessary
conditions for various classes of measurement matrices, while proofs
of more technical lemmas are given in the appendices.  Finally, we
conclude and discuss open problems in Section~\ref{SecDiscuss}.

\subsection{Problem formulation} 
\label{SecProblem}

There are a variety of problem formulations in the growing body of
work on compressive sensing and related areas.  The signal model may
be exactly sparse, approximately sparse, or compressible (i.e. that
the signal is approximately sparse in some orthonormal basis).  The
most common signal model is a deterministic one, although Bayesian
formulations are also possible.  In addition, the observation model
can be either noiseless or noisy, and the measurement matrix can be
random or deterministic.  Furthermore, the signal recovery can be
perfect or approximate, assessed by various error metrics (e.g.,
$\ell_q$-norms, prediction error, subset recovery).

In this paper, we consider a deterministic signal model, in which
$\beta \in \real^\pdim$ is a fixed but unknown vector with exactly
$\kdim$ non-zero entries.  We refer to $\kdim$ as the \emph{signal
sparsity} and $\pdim$ as the \emph{signal dimension}, and define the
support set of $\beta$ as
\begin{eqnarray}\
\Sset & \defn & \{i \in \{1,\ldots,p\} \, \mid \, \beta_i \neq 0\}.
\end{eqnarray}  
Note that there are $\Numset = {\pdim \choose \kdim}$ possible support
sets, corresponding to the $\Numset$ possible $\kdim$-dimensional
subspaces in which $\beta$ can lie.  We are given a vector of
$\numobs$ noisy observations $Y \in \real^\numobs$, of the form
\begin{eqnarray}
\label{EqnNoisyObs}
   Y &=& X \beta + W,
\end{eqnarray}
where $X \in \R^{n \times p}$ is the measurement matrix, and \mbox{$W
\sim N(0,\sigma^2 I_{n \times n})$} is additive Gaussian noise.  Our
results apply to various classes of dense and $\spgam$-sparsified
measurement matrices, which will be defined concretely in
Section~\ref{SecMain}.  Throughout this paper, we assume without loss
of generality that $\sigma^2=1$, since any scaling of $\sigma$ can be
accounted for in the scaling of $\beta$.

Our goal is to perform exact recovery of the support set $\Sset$,
which corresponds to a standard model selection error criterion.  More
precisely, we measure the error between the estimate $\betahat$ and
the true signal $\beta$ using the $\{0,1\}$-valued loss function:
\begin{equation}
\label{EqnError}
\rho(\betahat,\beta) \defn \mathbb{I} \left[\{\betahat_i \neq 0, \;
   \forall i \in S\} \cap \{\betahat_j = 0, \; \forall j \not\in
   S\}\right].
\end{equation}
The results of this paper apply to arbitrary decoders.  Any decoder is
a mapping $g$ from the observations $Y$ to an estimated subset
$\Shat=g(Y)$.  Let $\mprob[g(Y) \neq \Sset \, \mid \, \Sset]$ be the
conditional probability of error given that the true support is $S$.
Assuming that $\beta$ has support $\Sset$ chosen uniformly at random
over the $\Numset$ possible subsets of size $\kdim$, the average
probability of error is given by
\begin{eqnarray}
   p_{err} &=& \frac{1}{{p \choose k}} \sum_S \mprob[g(Y) \neq \Sset
   \, \mid \, \Sset].
\end{eqnarray}
We say that sparsity recovery is asymptotically reliable if $p_{err}
\rightarrow 0$ as $\numobs \rightarrow \infty$.  Since we are trying
to recover the support exactly from noisy measurements, our results
necessarily involve the minimum value of $\beta$ on its support,
\begin{eqnarray}
\label{EqnDefnBetaMin} 
\betamin & \defn & \min_{i \in \Sset} |\beta_i|.
\end{eqnarray}
In particular, our results apply to decoders that operate over the signal
class
\begin{eqnarray}
\label{EqnDefnSigClass}
\SigClass(\betamin) & \defn & \{\beta \in \real^p \, \mid \, |\beta_i|
\geq \betamin \; \forall i \in \Sset\}.
\end{eqnarray}

With this set-up, our goal is to find necessary conditions on the
parameters $(\numobs, \pdim, \kdim, \betamin, \spgam)$ that any
decoder, regardless of its computational complexity, must satisfy for
asymptotically reliable recovery to be possible.  We are interested in
lower bounds on the number of measurements $\numobs$, in general
settings where both the signal sparsity $\kdim$ and the measurement
sparsity $\spgam$ are allowed to scale with the signal dimension
$\pdim$.  As our analysis shows, the appropriate notion of rate for
this problem is $R = \frac{\log {\pdim \choose \kdim}}{\numobs}$.

\subsection{Our contributions}
\label{SecRelated}

One body of past work~\cite{Fletcher06,Bar06,AerZhaVen07} has focused
on the information-theoretic limits of sparse estimation under
$\ell_2$ and other distortion metrics, using power-based SNR measures
of the form
\begin{eqnarray}
\label{EqnDefnPastSNR}
 \pastSNR &\defn& \frac{\Exs[\|X\beta\|_2^2]}{\Exs[\|W\|_2^2]} \;\; =
   \;\; \|\beta\|_2^2.
\end{eqnarray}
(Note that the second equality assumes that the noise variance
$\sigma^2 = 1$, and that the measurement matrix is standardized, with
each element $X_{ij}$ having zero-mean and variance one.)  It is
important to note that the power-based SNR~\eqref{EqnDefnPastSNR},
though appropriate for $\ell_2$-distortion, is not suitable for the
support recovery problem.  Although the minimum value is related to
this power-based measure by the inequality $\kdim \minval^2 \leq
\pastSNR$, for the ensemble of signals $\SigClass(\betamin)$ defined
in equation~\eqref{EqnDefnSigClass}, the $\ell_2$-based SNR
measure~\eqref{EqnDefnPastSNR} can be made arbitrarily large, while
still having one coefficient $\beta_i$ equal to the minimum value
(assuming that $\kdim > 1$).  Consequently, as our results show, it is
possible to generate problem instances for which support recovery is
arbitrarily difficult---in particular, by sending $\betamin
\rightarrow 0$ at an arbitrarily rapid rate---even as the power-based
SNR~\eqref{EqnDefnPastSNR} becomes arbitrarily large.

The paper~\cite{Wainwright06_info} was the first to consider the
information-theoretic limits of exact subset recovery using dense
Gaussian measurement ensembles, explicitly identifying the minimum
value $\betamin$ as the key parameter.  This analysis yielded
necessary and sufficient conditions on general quadruples $(\numobs,
\pdim, \kdim, \betamin)$ for asymptotically reliable recovery.
Subsequent work~\cite{Reeves, Harvard} has extended this type of
analysis to the criterion of partial support recovery.  In this paper,
we consider only exact support recovery, but provide results for
general dense measurement ensembles, thereby extending previous
results.  In conjunction with known sufficient
conditions~\cite{Wainwright06_info}, one consequence of our first main
result (Theorem~\ref{ThmDenseNew}, below) is a set of sharp necessary
and sufficient conditions for the optimal decoder to recover the
support of a signal with linear sparsity ($\kdim = \Theta(\pdim)$),
using only a linear fraction of observations ($\numobs =
\Theta(\pdim)$).  Moreover, for the special case of the standard
Gaussian ensemble, Theorem~\ref{ThmDenseNew} also recovers some
results independently obtained in concurrent work by
Reeves~\cite{Reeves}, and Fletcher et al.~\cite{Fletcher08}.

We then consider the effect of measurement sparsity, which we assess
in terms of the fraction $\spgam \in (0,1]$ of non-zeros per row of
the the measurement matrix $X$.  Some past work in compressive sensing
has proposed computationally efficient recovery methods based on
sparse measurement matrices, including work inspired by expander
graphs and coding theory~\cite{XuHa07,SaBaBa06a}, sparse random
projections for Johnson-Lindenstrauss embeddings~\cite{WaGaRa07}, and
sketching and group testing~\cite{Gil06,ComMut05}.  All of this work
deals with the noiseless observation model, in contrast to the noisy
observation model~\eqref{EqnNoisyObs} considered here.  The
paper~\cite{AerZhaVen07} provides results on sparse measurements for
noisy problems and distortion-type error metrics, using a Bayesian
signal model and power-based SNR that is not appropriate for the
subset recovery problem.  Also, some concurrent work~\cite{OmiWai08}
provides sufficient conditions for support recovery using the Lasso
($\ell_1$-constrained quadratic programming) for appropriately
sparsified ensembles.  These results can be viewed as complementary to
the information-theoretic analysis of this paper.  In this paper, we
characterize the inherent trade-off between measurement sparsity and
statistical efficiency.  More specifically, our second main result
(Theorem~\ref{ThmSparseNew}, below) provides necessary conditions for
exact support recovery, using $\spgam$-sparsified Gaussian measurement
matrices (see equation~\eqref{EqnGamSparse}), for general scalings of
the parameters $(\numobs, \pdim, \kdim, \betamin, \spgam)$.  This
analysis reveals three regimes of interest, corresponding to whether
measurement sparsity has no effect, a small effect, or a significant
effect on the number of measurements necessary for recovery.  Thus,
there exist regimes in which measurement sparsity fundamentally alters
the ability of any method to decode.

\section{Main results and consequences}
\label{SecMain}

In this section, we state our main results, and discuss some of their
consequences.  Our analysis applies to random ensembles of measurement
matrices $X \in \real^{\numobs \times \pdim}$, where each entry
$X_{ij}$ is drawn i.i.d. from some underlying distribution.  The most
commonly studied random ensemble is the standard Gaussian case, in
which each $X_{ij} \sim N(0,1)$.  Note that this choice generates a
highly dense measurement matrix $X$, with $\numobs \pdim$ non-zero
entries.  Our first result (Theorem~\ref{ThmDenseNew}) applies to more
general ensembles that satisfy the moment conditions $\Exs[X_{ij}] =
0$ and $\var(X_{ij}) = 1$, which allows for a variety of non-Gaussian
distributions (e.g., uniform, Bernoulli etc.).  In addition, we also
derive results (Theorem~\ref{ThmSparseNew}) for $\spgam$-sparsified
matrices $X$, in which each entry $X_{ij}$ is i.i.d.  drawn according
to
\begin{eqnarray}
\label{EqnGamSparse}
   X_{ij} = \left\{
   \begin{array}{rl}
       N(0,\frac{1}{\spgam}) & \text{ w.p. } \spgam\\ 
       0 & \text{ w.p. } 1 - \spgam
   \end{array}.
   \right.
\end{eqnarray}
Note that when $\spgam=1$, $X$ is exactly the standard Gaussian
ensemble.  We refer to the sparsification parameter $0 \leq \spgam\leq
1$ as the \emph{measurement sparsity}.  Our analysis allows this
parameter to vary as a function of $(\numobs, \pdim, \kdim)$.

\subsection{Tighter bounds on dense ensembles} 

We begin by noting an analogy to the Gaussian channel coding problem
that yields a straightforward but loose set of necessary conditions.
Support recovery can be viewed as a channel coding problem, in which
there are $N = {\pdim \choose \kdim}$ possible support sets of
$\beta$, corresponding to messages to be sent over a Gaussian channel
with noise variance $1$.  The effective code rate is then $R =
\frac{\log{\pdim \choose \kdim}}{\numobs}$.  If each support set
$\Sset$ is encoded as the codeword $c(\Sset) = X \beta$, where $X$ has
i.i.d. Gaussian entries, then by standard Gaussian channel capacity
results, we immediately obtain a lower bound on the number of
observations $\numobs$ necessary for asymptotically reliable recovery,
\begin{eqnarray}
\label{EqnGaussChannel}
   \numobs & > & \frac{\log{\pdim \choose \kdim}}{\frac{1}{2}
   \log\left(1+ \|\beta\|_2^2\right)}.
\end{eqnarray} 
This bound is tight for $\kdim = 1$ and Gaussian measurements, but
loose in general. As Theorem~\ref{ThmDenseNew} clarifies, there are
additional elements in the support recovery problem that distinguish
it from a standard Gaussian coding problem: first, the signal power
$\|\beta\|_2^2$ does not capture the inherent problem difficulty for
$\kdim > 1$, and second, there is overlap between support sets for
$\kdim > 1$.  The following result provides sharper conditions on
subset recovery.
\begin{theorem}[General ensembles]
\label{ThmDenseNew}
Let the measurement matrix \mbox{$X \in \real^{\numobs \times \pdim}$}
be drawn with i.i.d. elements from any distribution with zero-mean and
variance one.  Then a necessary condition for asymptotically reliable
recovery over the signal class $\SigClass(\betamin)$ is
\begin{eqnarray}
\label{EqnNewDenseConverse}
   \numobs & > & \max \big \{ \myboundone, \quad \myboundtwo, \quad
   \kdim-1 \big \},
\end{eqnarray}
where 
\begin{subequations}
\begin{eqnarray}
\myboundone & \defn & \frac{\MYLOGNUM}{\frac{1}{2}\log\left(1 + \kdim
\minval^2 (1 - \frac{\kdim}{\pdim })\right)} \\
\myboundtwo & \defn & \frac{\log (\pdim - \kdim +1) - 1}{\frac{1}{2}
\log\left(1+ \minval^2 (1-\frac{1}{\pdim-\kdim+1})\right)}.
\end{eqnarray}
\end{subequations}
\end{theorem}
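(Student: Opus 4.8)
The plan is to derive each of the three terms in the lower bound~\eqref{EqnNewDenseConverse} from Fano's inequality, applied to three increasingly refined restrictions of the problem. The support $\Sset$ is chosen uniformly over the $\Numset = \binom{\pdim}{\kdim}$ possibilities, and the decoder observes the pair $(Y,X)$; since $X$ is independent of $\Sset$, the relevant mutual information is $I\big(\Sset;(Y,X)\big) = I(Y;\Sset \mid X)$. Fano's inequality then gives $p_{err} \geq 1 - \frac{I(Y;\Sset\mid X) + \log 2}{\log \Numset}$, so that asymptotically reliable recovery forces $I(Y;\Sset\mid X) \geq \log\binom{\pdim}{\kdim}\,(1-o(1)) - \log 2$. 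The whole game is therefore to upper bound the conditional mutual information $I(Y;\Sset\mid X)$ as sharply as possible in terms of $\numobs$ and the signal parameters, and then invert; because the decoder must succeed for every $\beta \in \SigClass(\betamin)$, when proving a necessary condition I am free to restrict attention to any convenient hard sub-ensemble.

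For the bound $\numobs > \myboundone$, I would restrict to the sub-ensemble in which every nonzero coefficient equals $\minval$, so that $\beta$ is a deterministic function of $\Sset$. Using that the rows of $X$ are i.i.d. and that the coordinates of $Y$ are conditionally independent given $(X,\Sset)$, subadditivity of entropy tensorizes the bound as $I(Y;\Sset \mid X) \leq \numobs\, I(Y_1;\Sset \mid X_1)$, where $Y_1,X_1$ denote a single measurement and row. For that single measurement $H(Y_1 \mid \Sset, X_1) = \tfrac{1}{2}\log(2\pi e)$ exactly, while the maximum-entropy principle (with a Jensen step over $X_1$) controls the mixture term, $H(Y_1 \mid X_1) \leq \tfrac{1}{2}\log\!\big(2\pi e\,\Exs[\var(Y_1 \mid X_1)]\big)$. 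The crux is the exact evaluation $\Exs[\var(Y_1\mid X_1)] = 1 + \minval^2 \sum_{j} \Exs[X_{1j}^2]\,\var(\mathbb{I}[j\in\Sset]) = 1 + \kdim\minval^2\big(1-\tfrac{\kdim}{\pdim}\big)$, in which the cross terms drop out because the entries of $X$ are uncorrelated, and $1-\kdim/\pdim$ is exactly the variance of the $\Ber(\kdim/\pdim)$ indicator $\mathbb{I}[j\in\Sset]$. Two features deserve emphasis: the computation uses only $\Exs[X_{1j}]=0$ and $\var(X_{1j})=1$, which is precisely why the result extends to general non-Gaussian ensembles; and the appearance of this fluctuation variance rather than the raw power $\kdim\minval^2$ reflects that the mean of $X\beta$, common to all hypotheses, carries no information about $\Sset$. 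Combining the resulting per-measurement bound with Fano and tidying the $\log 2$ and $o(\log\Numset)$ corrections into the numerator's ``$-1$'' yields $\numobs > \myboundone$.

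The bound $\numobs > \myboundtwo$ follows from the same machinery applied to a genie-aided restriction: reveal $\kdim-1$ of the support indices together with their coefficient values, so that after subtracting their known contribution the decoder faces $Y' = \minval\, X_{\jstar} + W$ and need only locate the single remaining index $\jstar$ among the $\pdim-\kdim+1$ admissible positions. This is exactly the calculation behind $\myboundone$ with a single unknown coordinate among $\pdim-\kdim+1$ candidates (that is, with $\kdim$ replaced by $1$ and $\pdim$ by $\pdim-\kdim+1$), giving per-measurement variance $1+\minval^2(1-\tfrac{1}{\pdim-\kdim+1})$ and, via Fano over $\pdim-\kdim+1$ hypotheses, the stated bound. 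Finally, $\numobs > \kdim-1$ is of a different, non-information-theoretic character: I would argue by identifiability that if $\numobs \leq \kdim-1$ then for almost every $X$ and every candidate support $S'$ of size $\kdim$ the system $X_{S'}\beta' = Y$ is underdetermined and admits a solution with all coordinates bounded away from zero, so distinct supports induce identical observation laws and $p_{err}$ cannot vanish.

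I expect the main obstacle to be the single-measurement mutual-information bound for general ensembles: absent Gaussianity there is no closed form for $I(Y_1;\Sset\mid X_1)$, so the argument must route through the maximum-entropy upper bound on the differential entropy of the Gaussian mixture $Y_1 \mid X_1$, and the sharpness of the constants $(1-\kdim/\pdim)$ and $(1-\tfrac{1}{\pdim-\kdim+1})$ rests on evaluating $\Exs[\var(Y_1\mid X_1)]$ exactly from only the first two moments of $X$. The remaining work is bookkeeping: checking that the $\log 2$ and $o(\log\Numset)$ corrections from Fano are genuinely absorbed by the numerators as written, and making the identifiability argument for the $\kdim-1$ term precise under the average-error criterion.
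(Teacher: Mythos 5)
Your derivations of $\myboundone$ and $\myboundtwo$ are, in substance, the paper's own proof: your constant-$\minval$ sub-ensemble is exactly the paper's restricted ensemble A, your genie-aided reduction to locating a single index among $\pdim-\kdim+1$ candidates is exactly its restricted ensemble B, and in both cases you run the identical chain of Fano's inequality, per-coordinate Gaussian maximum-entropy bound, Jensen's inequality over $X$, and exact evaluation of the averaged variance. Your computation $\Exs[\var(Y_1\mid X_1)] = 1+\kdim\minval^2(1-\kdim/\pdim)$, with cross terms killed by $\Exs[X_{1j}X_{1j'}]=0$, is precisely the content of Lemma~\ref{LemMoments} (and Lemma~\ref{LemMomentsRP2} for ensemble B); the paper reaches the same identity by a longer counting argument using Vandermonde's identity, so your indicator-variance decomposition is a cleaner route to the same lemma rather than a different proof.

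The one genuine divergence is the term $\kdim-1$, and there your sketch has a real gap. (The paper's proof section never establishes this term either; it is inherited from prior work, so you are not contradicting the paper, but the argument you propose does not work as stated.) The confusing vector $\beta'$ you construct solves a linear system involving the realized matrix $X$ (and, as written, the noisy observation $Y$ rather than the noiseless mean $X_{\Sset}\beta_{\Sset}$), so it is not a fixed element of $\SigClass(\betamin)$ specified independently of $X$ --- which is what a converse for this observation model requires, since the signal and the measurement matrix are independent and the error probability averages over $X$. Consistency of every candidate support with the observation does not imply ``identical observation laws'' under any admissible prior: for instance, under the constant-$\minval$ prior of ensemble A, the $\binom{\pdim}{\kdim}$ conditional means $\minval\sum_{j\in S}X_j$ are (for a continuous ensemble) almost surely pairwise distinct even when $\numobs\le\kdim-1$, so conditionally on $X$ the laws of $Y$ all differ and the two-point indistinguishability argument collapses. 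A correct proof must instead place a spread-out prior on the nonzero values, argue that conditionally on a generic $X$ the posterior mass is comparable across the many supports whose affine solution sets (of dimension $\kdim-\numobs\ge 1$) intersect the constraint region $\{|\beta'_i|\ge\minval\}$, and handle that constraint explicitly. The gap is contained --- it affects only the weakest of the three bounds --- but it is a genuine one.
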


The proof of Theorem~\ref{ThmDenseNew}, given in
Section~\ref{SecProof}, uses Fano's inequality to bound the
probability of error of any recovery method.  In addition to the
standard Gaussian ensemble ($X_{ij} \sim N(0,1)$), this result also
covers matrices from other common ensembles (e.g., Bernoulli $X_{ij}
\in \, \{-1, +1\}$).  It generalizes and strengthens earlier results
on subset recovery~\cite{Wainwright06_info}.  Note that $\|\beta\|_2^2
\geq \kdim \minval^2$ (with equality in the case when
$|\beta_i|=\minval$ for all indices $i \in \Sset$), so that this bound
is strictly tighter than the intuitive bound~\eqref{EqnGaussChannel}.
Moreover, by fixing the value of $\beta$ at $(\kdim-1)$ indices to
$\betamin$ and allowing the last component of $\beta$ to tend to
infinity, we can drive the power $\|\beta\|_2^2$ to infinity, while
still having the minimum enter the lower bound.

The necessary conditions in Theorem~\ref{ThmDenseNew} can be compared
against the sufficient conditions in
Wainwright~\cite{Wainwright06_info} for exact support recovery using
the standard Gaussian ensemble, as shown in Table~\ref{TabDense}.  We
obtain tight necessary and sufficient conditions in the regime of
linear signal sparsity (meaning $\kdim/\pdim = \alpha$ for some
$\alpha \in (0,1)$), under various scalings of the minimum value
$\betamin$.  We also obtain tight matching conditions in the regime of
sublinear signal sparsity (in which $\kdim/\pdim \rightarrow 0$), when
$\kdim\betamin^2 = \Theta(1)$.  There remains a slight gap, however,
in the sublinear sparsity regime when $\kdim\betamin^2 \rightarrow
\infty$ (see bottom two rows in Table~\ref{TabDense}).
Moreover, these information-theoretic bounds can be compared to the
recovery threshold of $\ell_1$-constrained quadratic programming,
known as the Lasso~\cite{Wainwright06a}.  This comparison reveals that
whenever $\kdim\betamin^2 = \Theta(1)$ (in both the linear and
sublinear sparsity regimes), then $\Theta(\kdim \log(\pdim-\kdim))$
observations are necessary and sufficient for sparsity recovery, and
hence the Lasso method is information-theoretically optimal.  In
contrast, when $\kdim\betamin^2 \rightarrow \infty$ and $\kdim/\pdim =
\alpha$, there is a gap between the performance of the Lasso and the
information-theoretic bounds.

\begin{table} 
\centering
\begin{tabular}{|c|c|c|}
   \hline
   & Necessary conditions & Sufficient conditions \\
   & (Theorem~\ref{ThmDenseNew}) & (Wainwright~\cite{Wainwright06_info}) \\
   \hline\hline
   $\begin{array}{c}
   \kdim = \Theta(\pdim) \\ \minval^2 = \Theta(\frac{1}{\kdim})
   \end{array}$ 
   & $\Theta(\pdim \log \pdim)$ & $\Theta(\pdim \log \pdim)$ \\
   \hline
   $\begin{array}{c}
   \kdim = \Theta(\pdim) \\ \minval^2 = \Theta(\frac{\log\kdim}{\kdim})
   \end{array}$
   & $\Theta(\pdim)$ & $\Theta(\pdim)$ \\ 
   \hline
   $\begin{array}{c}
   \kdim = \Theta(\pdim) \\ \minval^2 = \Theta(1)
   \end{array}$
   & $\Theta(\pdim)$ & $\Theta(\pdim)$ \\
   \hline
   $\begin{array}{c}
   \kdim = o(\pdim) \\ \minval^2 = \Theta(\frac{1}{\kdim})
   \end{array}$ 
   & $\Theta(\kdim \log(\pdim-\kdim))$ & $\Theta(\kdim \log(\pdim-\kdim))$ \\
   \hline \hline
   $\begin{array}{c}
   \kdim = o(\pdim) \\ \minval^2 = \Theta(\frac{\log\kdim}{\kdim})
   \end{array}$
   & $\max \left\{\Theta\left( \frac{\kdim \log
   \frac{\pdim}{\kdim}}{\log\log\kdim} \right),
   \Theta\left(\frac{\kdim \log(\pdim-\kdim)}{\log\kdim} \right)
   \right\}$
   & $\Theta\left(\kdim \log\frac{\pdim}{\kdim}\right)$ \\
   \hline
   $\begin{array}{c}
   \kdim = o(\pdim) \\ \minval^2 = \Theta(1)
   \end{array}$
   & $\max \left\{ \Theta\left(\frac{\kdim \log
   \frac{\pdim}{\kdim}}{\log\kdim}\right), \Theta(\kdim) \right\}$
   & $\Theta\left(\kdim \log\frac{\pdim}{\kdim}\right)$ \\
   \hline
\end{tabular}
\caption{Tight necessary and sufficient conditions on the number of
observations $\numobs$ required for exact support recovery are
obtained in several regimes of interest.}
\label{TabDense} 
\end{table}

Theorem~\ref{ThmDenseNew} has some consequences related to results
proved in concurrent work.  Reeves and Gastpar~\cite{Reeves} have
shown that in the regime of linear sparsity $\kdim/\pdim = \alpha >
0$, if any decoder is given only a linear fraction sample size
(meaning that $\numobs = \Theta(\pdim)$), then in order to recover the
support exactly, one must have $\kdim \minval^2 \rightarrow +\infty$.
This result is one corollary of Theorem~\ref{ThmDenseNew}, since if
$\minval^2 = \Theta(1/\kdim)$, then we have
\begin{eqnarray*}
\numobs \; > \; \frac{ \log (\pdim - \kdim + 1)-1}{\frac{1}{2}\log(1 +
\Theta(1/\kdim))} & = & \Omega( \kdim \log(\pdim - \kdim) ) \; \gg
\Theta(\pdim),
\end{eqnarray*}
so that the scaling $\numobs = \Theta(\pdim)$ is precluded.  In other
concurrent work, Fletcher et al.~\cite{Fletcher08} used direct methods
to show that for the special case of the standard Gaussian ensemble,
the number of observations must satisfy $\numobs >
\Omega\left(\frac{\log (\pdim - \kdim)}{\minval^2} \right)$.  This
bound is a consequence of our lower bound $\myboundtwo$; moreover,
Theorem~\ref{ThmDenseNew} implies the same lower bound for general
(non-Gaussian) ensembles as well.

In the regime of linear sparsity, Wainwright~\cite{Wainwright06_info}
showed, by direct analysis of the optimal decoder, that the scaling
$\minval^2 = \Omega(\log(\kdim)/\kdim)$ is sufficient for exact
support recovery using a linear fraction $\numobs = \Theta(\pdim)$ of
observations.  Combined with the necessary condition in
Theorem~\ref{ThmDenseNew}, we obtain the following corollary that
provides a sharp characterization of the linear-linear regime:
\begin{corollary}
Consider the regime of linear sparsity, meaning that $\kdim/\pdim =
\alpha \in (0,1)$, and suppose that a linear fraction $\numobs =
\Theta(\pdim)$ of observations are made.  Then the optimal decoder can
recover the support exactly if and only if $\minval^2 = \Omega(\log
\kdim/\kdim)$.
\end{corollary}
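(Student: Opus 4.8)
The plan is to prove the two directions of the biconditional separately: the sufficiency (``if'') direction is inherited directly from known achievability results, while the necessity (``only if'') direction follows by specializing Theorem~\ref{ThmDenseNew} to the linear-linear regime.

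For sufficiency, I would simply invoke Wainwright~\cite{Wainwright06_info}, which establishes, by direct analysis of the optimal decoder, that in the linear sparsity regime $\kdim = \alpha \pdim$ the scaling $\minval^2 = \Omega(\log \kdim/\kdim)$ suffices for exact support recovery using a linear fraction $\numobs = \Theta(\pdim)$ of observations. No new argument is needed for this half.

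For necessity, I would work with the second lower bound $\myboundtwo$ of Theorem~\ref{ThmDenseNew}. Substituting $\kdim = \alpha \pdim$ gives $\pdim - \kdim + 1 = (1-\alpha)\pdim + 1 = \Theta(\pdim)$, so that $\log(\pdim - \kdim + 1) = \Theta(\log\pdim) = \Theta(\log\kdim)$ and the correction term $1 - \frac{1}{\pdim - \kdim + 1}$ tends to $1$. The necessary condition $\numobs > \myboundtwo$ then rearranges to $\frac{1}{2}\log\!\left(1 + \minval^2(1 - o(1))\right) > \Theta\!\left(\frac{\log\kdim}{\kdim}\right)$, using $\numobs = \Theta(\pdim) = \Theta(\kdim)$ in the denominator. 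Since the right-hand side tends to zero, either $\minval^2$ is bounded away from zero (whence $\minval^2 = \Omega(\log\kdim/\kdim)$ trivially), or $\minval^2 \to 0$ and the linearization $\log(1 + \minval^2) = \minval^2(1 + o(1))$ yields $\minval^2 = \Omega(\log\kdim/\kdim)$. Either way necessity follows.

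The remaining work is bookkeeping to confirm that the other two entries of the maximum in~\eqref{EqnNewDenseConverse} impose nothing stronger. Using $\log\binom{\pdim}{\kdim} = \Theta(\pdim) = \Theta(\kdim)$ and $1 - \kdim/\pdim = 1 - \alpha$, the first bound becomes $\myboundone = \Theta\!\left(\frac{\kdim}{\log(1 + \kdim\minval^2)}\right)$, so $\numobs = \Theta(\kdim) > \myboundone$ forces only $\kdim\minval^2 = \Omega(1)$, which is implied by $\minval^2 = \Omega(\log\kdim/\kdim)$; and the term $\kdim - 1 = \Theta(\pdim)$ is compatible with $\numobs = \Theta(\pdim)$ for a suitable constant and places no constraint on $\minval$. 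The only genuine obstacle is the asymptotic analysis of $\myboundtwo$ as $\minval^2 \to 0$, where the linearization $\log(1 + \minval^2) \asymp \minval^2$ must be justified; once that is in hand the threshold $\log\kdim/\kdim$ emerges and matches the sufficient condition exactly, closing the characterization.
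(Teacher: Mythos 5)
Your proposal is correct and takes essentially the same route as the paper: sufficiency is inherited from the achievability result of Wainwright~\cite{Wainwright06_info}, and necessity follows by specializing the bound $\myboundtwo$ of Theorem~\ref{ThmDenseNew} to $\kdim = \alpha\pdim$ with $\numobs = \Theta(\pdim)$, which is exactly how the paper obtains the corollary. Your added bookkeeping (the linearization $\log(1+\minval^2) \asymp \minval^2$ as $\minval^2 \to 0$ and the check that $\myboundone$ and $\kdim-1$ impose nothing stronger) simply makes explicit what the paper leaves implicit.
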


\subsection{Effect of measurement sparsity} 

We now turn to the effect of measurement sparsity on recovery,
considering in particular the $\spgam$-sparsified
ensemble~\eqref{EqnGamSparse}.  Even though the average
signal-to-noise ratio of our channel remains the same (since
$\var(X_{ij}) = 1$ for all choices of $\spgam$ by construction), the
Gaussian channel coding bound~\eqref{EqnGaussChannel} is clearly not
tight for sparse $X$, even in the case of $\kdim=1$.  The loss in
statistical efficiency is due to the fact that we are constraining our
codebook to have a sparse structure, which may be far from a
capacity-achieving code. Theorem~\ref{ThmDenseNew} applies to any
ensemble in which the components are zero-mean and unit variance.
However, if we apply it to the $\spgam$-sparsified ensemble, it yields
lower bounds that are independent of $\spgam$.  Intuitively, it is
clear that the procedure of $\spgam$-sparsification should cause
deterioration in support recovery.  Indeed, the following result
provides refined bounds that capture the effects of
$\spgam$-sparsification.  Let $\phi(\mu,\sigma^2)$ denote the Gaussian
density with mean $\mu$ and variance $\sigma^2$, and define the
following two mixture distributions:
\begin{eqnarray}
   \label{EqnDensity1}
   \psibar_1 &\defn& \sum_{l=0}^{\kdim} {\kdim \choose \ell}
   \spgam^\ell (1-\spgam)^{\kdim-\ell} \; \phi\left(0,
   1+\frac{\ell\minval^2}{\spgam}\right)\\
   \label{EqnDensity2}
   \psibar_2 &\defn& \spgam \; \phi\left(0,
   1+\frac{\minval^2}{\spgam}\right) + (1-\spgam) \; \phi(0,1).
\end{eqnarray}
Furthermore, let $H(\cdot)$ denote the entropy functional.  With this
notation, we have the following result.

\begin{theorem}[Sparse ensembles]
\label{ThmSparseNew}
Let the measurement matrix $X \in \real^{\numobs \times \pdim}$ be
drawn with i.i.d. elements from the $\spgam$-sparsified Gaussian
ensemble~\eqref{EqnGamSparse}.  Then a necessary condition for
asymptotically reliable recovery over the signal class
$\SigClass(\betamin)$ is
\begin{eqnarray}
\label{eqn:sparse-converse}
\numobs & > & \max \big \{ \spboundone, \quad \spboundtwo, \quad
\kdim-1 \big \},
\end{eqnarray}
where
\begin{subequations}
\begin{eqnarray}
\spboundone & \defn & \frac{\MYLOGNUM}{H(\psibar_1) -
\frac{1}{2}\log(2\pi\e)} \\
\spboundtwo & \defn & \frac{\log(\pdim - \kdim + 1) - 1}{H(\psibar_2)
- \frac{1}{2}\log(2\pi\e)}.
\end{eqnarray}
\end{subequations}
\end{theorem}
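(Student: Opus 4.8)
The plan is to lower-bound the error probability of \emph{any} decoder via Fano's inequality, applied to the hardest signal in the class $\SigClass(\betamin)$: the signal whose nonzero entries all equal $\betamin$, with support $\Sset$ drawn uniformly over the $\Numset = \binom{\pdim}{\kdim}$ subsets. Failure of recovery for this sub-ensemble implies failure over the whole class, so it suffices to analyze it. A decoder observes the pair $(X,Y)$, and since the random matrix $X$ is drawn independently of $\Sset$, the relevant quantity is the conditional mutual information $I(\Sset; Y \mid X)$; conditioning on $X$ is essential here, since after marginalizing out $X$ the observations are distributed identically for every support and carry no information at all. Fano's inequality gives $H(\Sset \mid Y, X) \le H_2(p_{err}) + p_{err}\log(\Numset - 1)$, where $H_2$ is the binary entropy; since $H(\Sset \mid Y, X) = \log\Numset - I(\Sset;Y\mid X)$, asymptotically reliable recovery ($p_{err}\to 0$) forces $I(\Sset;Y\mid X) \ge \log\Numset - 1$ for $\numobs$ large, the constant $1$ absorbing $\log 2$ together with the vanishing slack.

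Next I would single-letterize. Conditioned on $(\Sset,X)$ the observations $Y_1,\dots,Y_\numobs$ are independent, each $Y_i = \langle X_i, \beta\rangle + W_i$ with $W_i \sim N(0,1)$; combined with subadditivity of the differential entropy of $Y$ given $X$ and the fact that $Y_i$ depends on $X$ only through its $i$th row $X_i$, this yields $I(\Sset;Y\mid X) \le \sum_{i} I(\Sset;Y_i \mid X_i) = \numobs\, I(\Sset; Y_1\mid X_1)$, using that the rows are i.i.d. For the single-observation term, $H(Y_1\mid \Sset, X_1) = \frac12\log(2\pi\e)$ exactly, since given the support and the row the observation is Gaussian with unit variance, while $H(Y_1\mid X_1)\le H(Y_1)$ by conditioning. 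It then remains to identify the marginal law of $Y_1$: writing $Y_1 = \betamin\sum_{j\in \Sset} X_{1j} + W_1$ and noting that the number $L$ of indices $j\in\Sset$ with $X_{1j}\ne 0$ is $\Bin(\kdim,\spgam)$, conditioning on $L=\ell$ gives $Y_1 \sim N(0, 1 + \ell\betamin^2/\spgam)$, and averaging over $L$ produces exactly the mixture $\psibar_1$ of equation~\eqref{EqnDensity1}. Hence $I(\Sset;Y_1\mid X_1)\le H(\psibar_1) - \frac12\log(2\pi\e)$, and combining this with the Fano bound gives $\numobs > \spboundone$ as a necessary condition.

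The bound $\spboundtwo$ comes from a genie-aided restriction, which can only make recovery easier and so still yields a necessary condition. Suppose an oracle reveals $\kdim-1$ of the true support indices (a fixed set $T\subset\Sset$); the decoder must then locate the single remaining index $J$ among the $\pdim-\kdim+1$ remaining candidates, a uniform hypothesis test over $m = \pdim-\kdim+1$ alternatives. Because $T$ and its values are known, subtracting the known contribution gives the sufficient statistic $Y_i' = Y_i - \betamin\sum_{j\in T} X_{ij} = \betamin X_{iJ} + W_i$, so $I(J; Y\mid X) = I(J; Y'\mid X)$. Repeating the tensorization and single-observation analysis with a single active coordinate, the marginal of $Y_1'$ is the two-component mixture $\psibar_2$ of equation~\eqref{EqnDensity2}, giving $I(J;Y_1'\mid X_1)\le H(\psibar_2)-\frac12\log(2\pi\e)$; Fano with $m$ hypotheses then produces $\numobs > \spboundtwo$.

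The trivial term $\numobs > \kdim-1$ I would dispatch by a separate elementary degrees-of-freedom argument (already present in the dense case), as it is dominated by $\spboundone$ and $\spboundtwo$ outside of high-SNR corners. The conceptual heart---and the step where the sparse case genuinely departs from Theorem~\ref{ThmDenseNew}---is the treatment of $H(Y_1\mid X_1)$. In the dense proof one bounds $H(Y_1)\le \frac12\log(2\pi\e\,\var(Y_1))$ by the Gaussian maximum-entropy inequality; but here $\var(Y_1) = 1 + \kdim\betamin^2$ is independent of $\spgam$, so that surrogate would wash out all dependence on the measurement sparsity and merely reproduce Theorem~\ref{ThmDenseNew}. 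The main obstacle is therefore to retain the exact differential entropy $H(\psibar_1)$ (respectively $H(\psibar_2)$) of the Gaussian mixture rather than its Gaussian upper bound; these entropies have no closed form, which is precisely why they enter the statement as the functionals $H(\psibar_1),H(\psibar_2)$, and their gap below $\frac12\log(2\pi\e\,\var(Y_1))$ is exactly what quantifies the statistical cost of sparsification.
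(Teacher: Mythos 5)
Your proposal is correct and follows essentially the same route as the paper: Fano's inequality applied to the same two restricted ensembles (all nonzeros equal to $\betamin$ with uniform support, and the genie-aided problem with $\kdim-1$ indices revealed), single-letterization via subadditivity of entropy, and the identification of the relevant per-observation law as the mixtures $\psibar_1$ and $\psibar_2$. The only differences are presentational: you phrase the key step as ``conditioning reduces entropy,'' $H(Y_1 \mid X_1) \le H(Y_1) = H(\psibar_1)$, where the paper fixes $X$, applies Fano, and then invokes Jensen's inequality for the concave entropy functional (the identical inequality), and you obtain the marginal of $Y_1$ by conditioning directly on the binomial count $L$ rather than via the paper's moment-generating-function computation in its Lemma~\ref{LemUstat}.
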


\begin{figure}
\centering \includegraphics[width=4in]{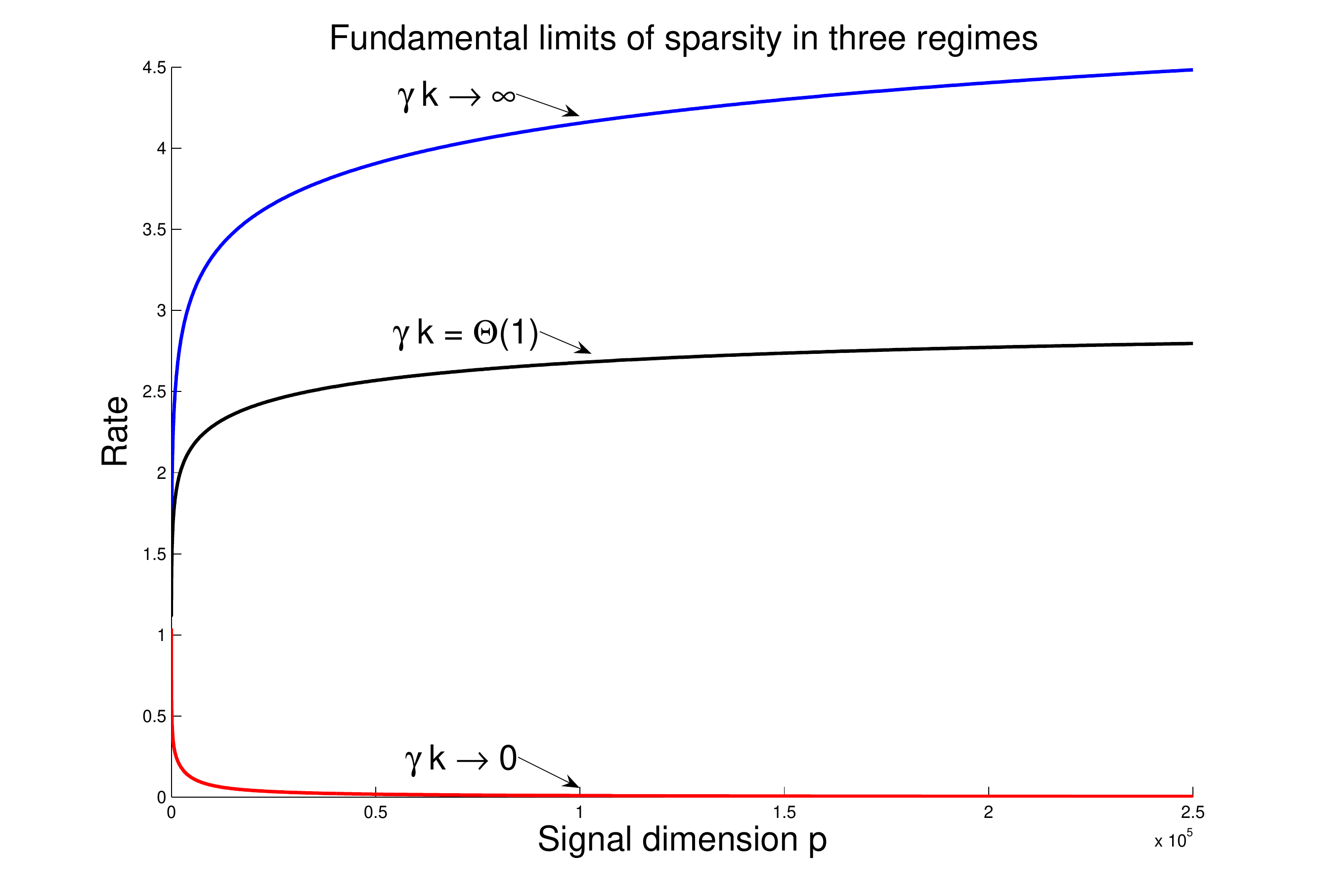}
\caption{The rate $R=\frac{\log {\pdim \choose \kdim}}{\numobs}$ is
plotted using equation~\eqref{eqn:sparse-converse} in three regimes,
depending on how the quantity $\spgam \kdim$ scales, where $\spgam \in
[0,1]$ denotes the measurement sparsification parameter and $\kdim$
denotes the signal sparsity.}  
\label{fig:three-regimes}
\end{figure}

The proof of Theorem~\ref{ThmSparseNew}, given in
Section~\ref{SecProof}, again uses Fano's inequality, but explicitly
analyzes the effect of measurement sparsification on the distribution
of the observations.  The necessary condition in
Theorem~\ref{ThmSparseNew} is plotted in
Figure~\ref{fig:three-regimes}, showing distinct regimes of behavior
depending on how the quantity $\spgam \kdim$ scales, where
$\spgam\in[0,1]$ is the measurement sparsification parameter and
$\kdim$ is the signal sparsity index.  In order to characterize the
thresholds at which measurement sparsity begins to degrade the
performance of any decoder, Corollary~\ref{Cor3Regimes} below further
bounds the necessary conditions in Theorem~\ref{ThmSparseNew} in three
cases.  For any scalar $\spgam$, let $\Hbin(\spgam)$ denote the
entropy of a $\Ber(\spgam)$ variate.

\begin{corollary}[Three regimes]
\label{Cor3Regimes}
The necessary conditions in Theorem~\ref{ThmSparseNew} can be
simplified as follows.
\begin{enumerate}
\item[(a)] In general,
\begin{subequations} \label{EqnRegimeInfty}
\begin{eqnarray} 
   \spboundone &\geq& \frac{\MYLOGNUM}{\frac{1}{2} \log\left(1 + k
   \minval^2\right)}, \\
   \spboundtwo &\geq& \frac{\log(\pdim - \kdim + 1) - 1}{\frac{1}{2}
   \log\left(1 + \minval^2\right)}.
\end{eqnarray}
\end{subequations}
\item[(b)] If $\spgam \kdim = \tau$ for some constant $\tau$, then
\begin{subequations} \label{EqnRegimeConst}
\begin{eqnarray} 
   \spboundone &\geq& \frac{\MYLOGNUM}{\frac{1}{2} \tau \log\left(1 +
   \frac{k\minval^2}{\tau}\right) + C}, \\
   \spboundtwo &\geq& \frac{\log(\pdim - \kdim + 1) - 1}{\frac{1}{2}
   (\frac{\tau}{\kdim}) \log \left( 1 + \frac{\kdim \minval^2}{\tau}
   \right) + \Hbin(\frac{\tau}{\kdim})},
\end{eqnarray}
\end{subequations}
where $C = \frac{1}{2}\log(2\pi\e(\tau + \frac{1}{12}))$ is a
constant.
\item[(c)] If $\spgam \kdim \leq 1$, then
\begin{subequations} \label{EqnRegimeZero}
\begin{eqnarray} 
   \spboundone &\geq& \frac{\MYLOGNUM}{\frac{1}{2} \spgam \kdim
   \log\left(1 + \frac{\minval^2}{\spgam}\right) + \kdim
   \Hbin(\spgam)}, \\
   \spboundtwo &\geq& \frac{\log(\pdim - \kdim + 1) - 1}{\frac{1}{2}
   \spgam \log \left(1 + \frac{\minval^2}{\spgam}\right) +
   \Hbin(\spgam)}.
\end{eqnarray}
\end{subequations}
\end{enumerate}
\end{corollary}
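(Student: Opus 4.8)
The plan is to bound the two entropy terms $H(\psibar_1)$ and $H(\psibar_2)$ that appear in the denominators of $\spboundone$ and $\spboundtwo$ in Theorem~\ref{ThmSparseNew}, since each part (a)--(c) amounts to a different upper bound on these entropies (smaller entropy yields a larger lower bound on $\numobs$). Throughout I will exploit two elementary facts. First, for any random variable with variance $v$, the Gaussian maximum-entropy property gives $H(\cdot) \le \frac{1}{2}\log(2\pi\e\, v)$; subtracting the constant $\frac{1}{2}\log(2\pi\e)$ then produces the clean form $\frac{1}{2}\log v$. Second, for a mixture distribution I will use the standard decomposition relating the entropy of a mixture to the conditional entropies of its components plus the entropy of the mixing variable, namely $H(\psibar) \le H(\text{mixing index}) + \sum_{\ell} (\text{weight}_\ell)\, H(\text{component}_\ell)$, which follows from $H(\text{mixture}) \le H(\text{index}, \text{value}) = H(\text{index}) + H(\text{value}\mid \text{index})$.

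For part (a), I would discard the discrete structure entirely: the variance of $\psibar_1$ is $1 + \kdim\minval^2$ (averaging $1 + \ell\minval^2/\spgam$ over $\ell \sim \Bin(\kdim,\spgam)$, whose mean is $\spgam\kdim$), and the variance of $\psibar_2$ is $1 + \minval^2$. Applying the Gaussian entropy bound to each and subtracting $\frac{1}{2}\log(2\pi\e)$ recovers exactly the claimed inequalities, which are simply the dense bounds of Theorem~\ref{ThmDenseNew} again. For part (c), where $\spgam\kdim \le 1$, the mixture is dominated by its $\ell=0$ component, so instead of collapsing to the variance I would keep the mixture decomposition: bound $H(\psibar_1)$ above by the entropy of the $\Bin(\kdim,\spgam)$ mixing index, which is at most $\kdim\Hbin(\spgam)$, plus the weighted sum of the component Gaussian entropies $\frac{1}{2}\log(1 + \ell\minval^2/\spgam)$. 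Using concavity of $\log$ (or simply retaining only the leading term in $\ell$) collapses the weighted component sum to roughly $\frac{1}{2}\spgam\kdim\log(1 + \minval^2/\spgam)$, which yields the stated $\spboundone$ denominator; the $\psibar_2$ calculation is the $\kdim=1$ analogue and is immediate from~\eqref{EqnDensity2}.

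Part (b), the constant regime $\spgam\kdim = \tau$, is where I expect the main obstacle. Here neither the crude variance bound (too loose) nor the small-$\spgam$ decomposition (the index entropy $\kdim\Hbin(\spgam)$ no longer simplifies cleanly) suffices, and the Poisson-type behavior of $\Bin(\kdim,\spgam)$ with $\spgam\kdim = \tau$ must be handled carefully. The constant $C = \frac{1}{2}\log(2\pi\e(\tau + \frac{1}{12}))$ strongly suggests a continuous/entropy-power argument in which the mixing index $\ell$ is treated as a continuous variable: the $\frac{1}{12}$ is the variance of a uniform random variable on an interval of unit length, which is exactly the correction one picks up when bounding the entropy of a discrete (integer-valued) random variable by that of a continuous density via a quantization argument. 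Concretely, I would bound the differential entropy of the weighted Gaussian-mixture in $\psibar_1$ by replacing the discrete Poisson-like index with a continuous random variable matching its first two moments (mean $\tau$, plus the $\frac{1}{12}$ discretization variance), apply the Gaussian entropy bound to the scale variable $\ell\minval^2/\spgam$, and then combine with Jensen's inequality for the $\log(1+\cdot)$ term to extract the factor $\frac{1}{2}\tau\log(1 + \kdim\minval^2/\tau)$. Verifying that the discretization correction produces precisely $\tau + \frac{1}{12}$ inside the logarithm, rather than a looser constant, will require the most care; the $\psibar_2$ bound in this regime, by contrast, follows directly from the mixture decomposition since $\spgam = \tau/\kdim$ makes the index entropy literally $\Hbin(\tau/\kdim)$.
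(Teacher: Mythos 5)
Your overall architecture matches the paper's: part (a) via the Gaussian maximum-entropy bound applied to the mixture variances $1+\kdim\minval^2$ and $1+\minval^2$, and parts (b)--(c) via the decomposition $H(\psibar_1) \le H(\text{value}\mid\text{index}) + H(\text{index})$, which is exactly the paper's Lemma~\ref{LemEntropy}, combined with $H(L) \le \kdim\Hbin(\spgam)$ in regime (c) and the quantization bound $H(L) \le \frac{1}{2}\log\bigl(2\pi\e(\var(L)+\frac{1}{12})\bigr)$ in regime (b) --- you correctly identified the uniform-$[0,1]$ origin of the $\frac{1}{12}$ in the constant $C$. Part (a) is correct as written, and your $\spboundtwo$ claims in (b)--(c) are also fine, since for a Bernoulli index $B$ the identity $\Exs_B\bigl[\log(1+B\minval^2/\spgam)\bigr] = \spgam\log(1+\minval^2/\spgam)$ holds exactly.

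The genuine gap is in the step you treat as routine: bounding $E \defn \Exs_L\bigl[\frac{1}{2}\log(1+L\minval^2/\spgam)\bigr]$, $L \sim \Bin(\kdim,\spgam)$, by $\frac{1}{2}\spgam\kdim\log(1+\minval^2/\spgam)$ (regime (c)) and by $\frac{1}{2}\tau\log(1+\kdim\minval^2/\tau)$ (regime (b)). Neither of your proposed tools delivers this. Concavity of $\log$ (Jensen) gives $E \le \frac{1}{2}\log(1+\kdim\minval^2)$, which is the part (a) bound and is \emph{weaker} than what you need: writing $t = \spgam\kdim$ and $x = \minval^2/\spgam$, for $t \le 1$ one has $(1+x)^t \le 1+tx$, i.e.\ $t\log(1+x) \le \log(1+tx)$, so the target $\frac{1}{2}t\log(1+x)$ lies strictly below the Jensen bound and cannot be deduced from it; and ``retaining the leading term in $\ell$'' is a lower-bound heuristic, not an upper bound. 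The inequality is nonetheless true, for a reason that crucially uses the integrality of $L$: for each integer $\ell \ge 0$ and $x \ge 0$, $(1+x)^\ell \ge 1+\ell x$, hence $\log(1+\ell x) \le \ell\log(1+x)$, and taking expectations gives $E \le \frac{1}{2}\Exs[L]\log(1+x) = \frac{1}{2}\spgam\kdim\log(1+\minval^2/\spgam)$, with $\spgam = \tau/\kdim$ yielding regime (b). This is precisely what the paper's Lemma~\ref{LemExsBin} establishes through a pmf identity and the binomial theorem. It also shows why your continuous-relaxation plan for regime (b) is misdirected: replacing the index by a continuous variable with matched moments destroys exactly the integrality on which the bound rests (it fails for real exponents $t<1$); the paper needs the $+\frac{1}{12}$ device only for the separate term $H(L)$, using $\var(L) = \kdim\spgam(1-\spgam) \le \tau$, while $E$ is handled by the same integer inequality as in regime (c).
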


\begin{table} 
\centering
\begin{tabular}{|c|c|c|}
   \hline
   \begin{tabular}{c}
   Necessary conditions \\
   (Theorem~\ref{ThmSparseNew})
   \end{tabular}
   & $\kdim = o(\pdim)$ & $\kdim = \Theta(\pdim)$ \\
   \hline\hline
   $\begin{array}{c}
   \minval^2 = \Theta(\frac{1}{\kdim}) \\ 
   \spgam = o(\frac{1}{\kdim\log\kdim})
   \end{array}$ 
   & $\Theta\left(\frac{\kdim \log(\pdim-\kdim)}{\spgam\kdim \log
   \frac{1}{\spgam}}\right)$
   & $\Theta\left(\frac{\pdim \log\pdim}{\spgam\pdim \log
   \frac{1}{\spgam}}\right)$ \\
   \hline
   $\begin{array}{c}
   \minval^2 = \Theta(\frac{1}{\kdim}) \\
   \spgam = \Omega(\frac{1}{\kdim\log\kdim}) 
   \end{array}$ 
   & $\Theta(\kdim \log(\pdim-\kdim))$ 
   & $\Theta(\pdim \log\pdim)$ \\
   \hline\hline
   $\begin{array}{c}
   \minval^2 = \Theta(\frac{\log\kdim}{\kdim}) \\
   \spgam = o(\frac{1}{\kdim\log\kdim}) 
   \end{array}$ 
   & $\Theta\left(\frac{\kdim \log(\pdim-\kdim)}{\spgam\kdim \log
   \frac{1}{\spgam}}\right)$
   & $\Theta\left(\frac{\pdim \log\pdim}{\spgam\pdim \log
   \frac{1}{\spgam}}\right)$ \\
   \hline
   $\begin{array}{c}
   \minval^2 = \Theta(\frac{\log\kdim}{\kdim}) \\
   \spgam = \Theta(\frac{1}{\kdim\log\kdim}) 
   \end{array}$ 
   & $\Theta(\kdim \log(\pdim-\kdim))$ 
   & $\Theta(\pdim \log\pdim)$ \\
   \hline
   $\begin{array}{c}
   \minval^2 = \Theta(\frac{\log\kdim}{\kdim}) \\
   \spgam = \Omega(\frac{1}{\kdim}) 
   \end{array}$
   & $\max \left\{ \Theta\left( \frac{\kdim \log
   \frac{\pdim}{\kdim}}{\log\log\kdim} \right), \Theta\left(
   \frac{\kdim \log(\pdim-\kdim)}{\log\kdim} \right)\right\}$
   & $\Theta(\pdim)$ \\ \hline
\end{tabular}
\caption{Necessary conditions on the number of observations $\numobs$
required for exact support recovery is shown in different regimes of
the parameters $(\pdim, \kdim, \betamin, \spgam)$.}
\label{TabSparse} 
\end{table}

Corollary~\ref{Cor3Regimes} reveals three regimes of behavior, defined
by the scaling of the measurement sparsity $\spgam$ and the signal
sparsity $\kdim$.  If $\spgam \kdim \rightarrow \infty$ as $\pdim
\rightarrow \infty$, then the recovery
threshold~\eqref{EqnRegimeInfty} is of the same order as the threshold
for dense measurement ensembles.  In this regime, sparsifying the
measurement ensemble has no asymptotic effect on performance.  In
sharp contrast, if $\spgam \kdim \rightarrow 0$ sufficiently fast as
$\pdim \rightarrow \infty$, then the recovery
threshold~\eqref{EqnRegimeZero} changes fundamentally compared to the
dense case.  Finally, if $\spgam \kdim = \Theta(1)$, then the recovery
threshold~\eqref{EqnRegimeConst} transitions between the two extremes.
Using the bounds in Corollary~\ref{Cor3Regimes}, the necessary
conditions in Theorem~\ref{ThmSparseNew} are shown in
Table~\ref{TabSparse} under different scalings of the parameters
$(\numobs, \pdim, \kdim, \betamin, \spgam)$.  In particular, if
$\spgam = o(\frac{1}{\kdim\log{\kdim}})$ and the minimum value
$\minval^2$ does not increase with $\kdim$, then the denominator
$\spgam\kdim \log\frac{1}{\spgam}$ goes to zero.  Hence, the number of
measurements that any decoder needs in order to recover reliably
increases dramatically in this regime.

\section{Proofs of our main results}
\label{SecProof}

In this section, we provide the proofs of Theorems~\ref{ThmDenseNew}
and~\ref{ThmSparseNew}.  Establishing necessary conditions for exact
sparsity recovery amounts to finding conditions on $(\numobs, \pdim,
\kdim, \betamin)$ (and possibly $\spgam$) under which the probability
of error of any recovery method stays bounded away from zero as
$\numobs \rightarrow \infty$.  At a high-level, our general approach
is quite simple: we consider restricted problems in which the decoder
has been given some additional side information, and then apply Fano's
inequality~\cite{Cover} to lower bound the probability of error.  In
order to establish the two types of necessary conditions (e.g,
$\myboundone$ versus $\myboundtwo$), we consider two classes of
restricted ensembles: one which captures the bulk effect of having
many competing subsets at large distances, and the other which
captures the effect of a smaller number of subsets at very close
distances.  This is illustrated in Figure~\ref{FigDecodeSubsets}.  We
note that although the first restricted ensemble is a harder problem,
applying Fano to the second restricted ensemble yields a tighter
analysis in some regimes.  In all cases, we assume that the support
$\Sset$ of the unknown vector $\beta \in \R^p$ is chosen randomly and
uniformly over all ${\pdim \choose \kdim}$ possible support sets.
Throughout the remainder of the paper, we use the notation $X_j \in
\R^\numobs$ to denote column $j$ of the matrix $X$, and $X_\Uset \in
\R^{\numobs \times |\Uset|}$ to denote the submatrix containing
columns indexed by set $\Uset$. Similarly, let $\beta_\Uset \in
\R^{|\Uset|}$ denote the subvector of $\beta$ corresponding to the
index set $\Uset$.  \\

\begin{figure}
\centering
\subfloat[]{\label{FigDecodeSubsets}\includegraphics[width=2in]{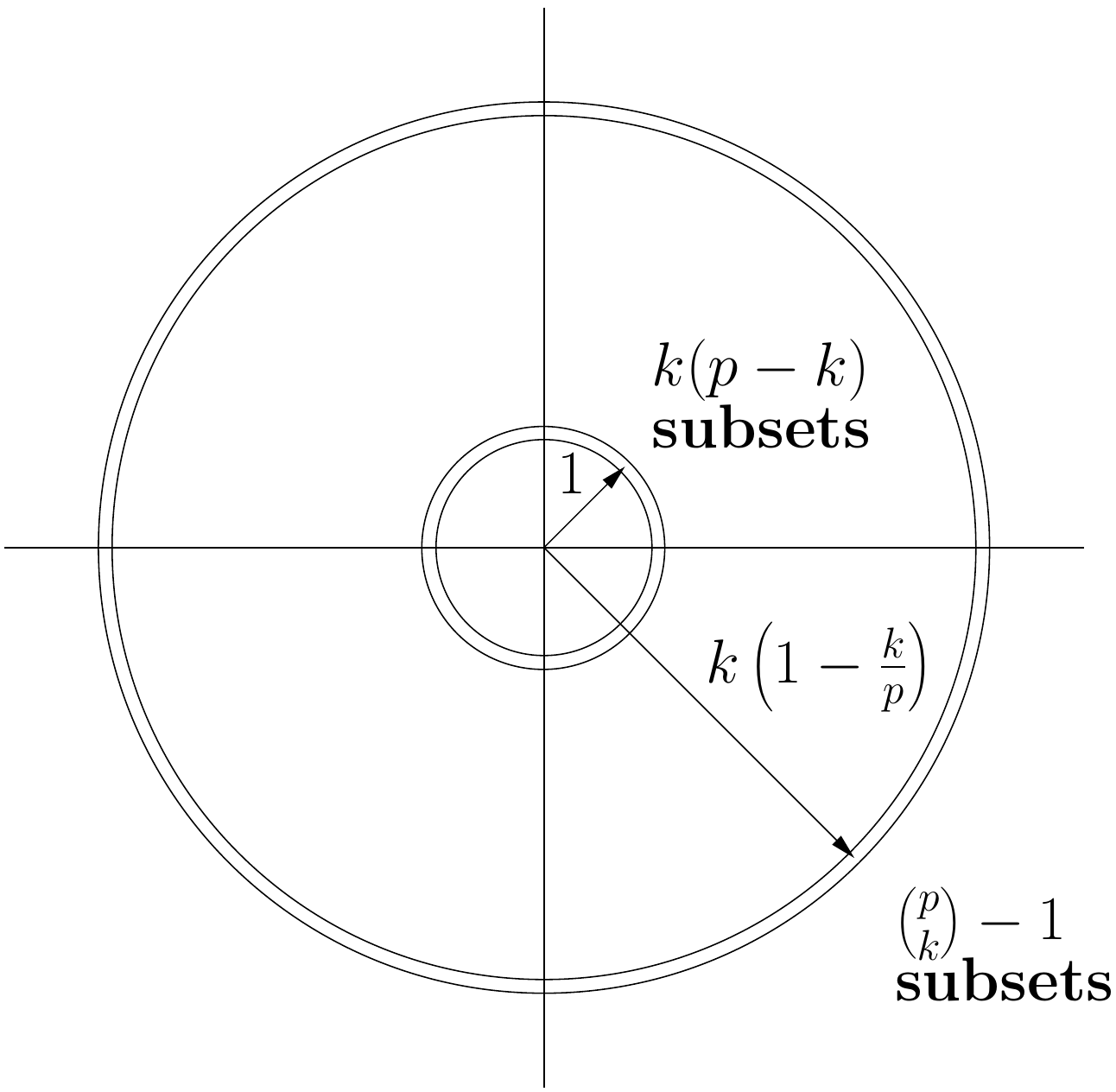}}
\hspace{.75in}
\subfloat[]{\label{FigRestrictedB}\includegraphics[width=2.5in]{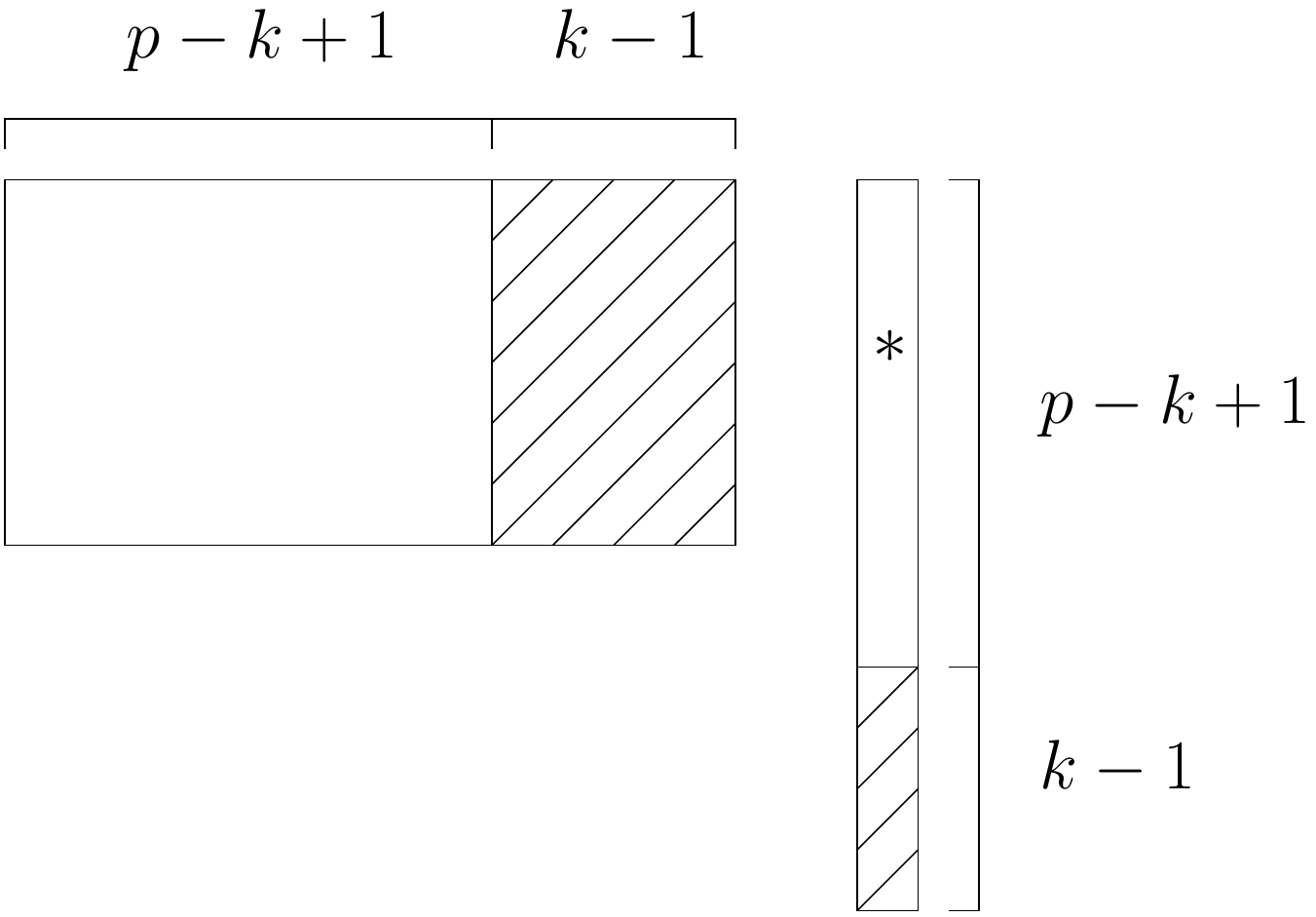}}
\caption{Illustration of restricted ensembles.  (a) In restricted
ensemble A, the decoder must distinguish between ${\pdim \choose
\kdim}$ support sets with an average overlap of size
$\frac{\kdim^2}{\pdim}$, whereas in restricted ensemble B, it must
decode amongst a subset of the $\kdim(\pdim-\kdim)+1$ supports with
overlap $\kdim-1$.  (b) In restricted ensemble B, the decoder is given
the locations of the $\kdim-1$ largest non-zeros, and it must estimate
the location of the smallest non-zero from the $\pdim-\kdim+1$
remaining possible indices.}
\end{figure}

\myparagraph{Restricted ensemble A:} 
In the first restricted problem, also exploited in previous
work~\cite{Wainwright06_info}, we assume that while the support set
$\Sset$ is unknown, the decoder knows \emph{a priori} that $\beta_j =
\minval$ for all $j \in \Sset$.  In other words, the decoder knows the
value of $\beta$ on its support, but it does not know the locations of
the non-zeros.  Conditioned on the event that $\Sset$ is the true
underlying support of $\beta$, the observation vector $Y \in
\R^\numobs$ can then be written as
\begin{eqnarray}
   Y &\defn& \sum_{j \in \Sset} X_j \minval + W.
\end{eqnarray}
If a decoder can recover the support of any $p$-dimensional $k$-sparse
vector $\beta$, then it must be able to recover a $k$-sparse vector
that is constant on its support.  Furthermore, having knowledge of the
value $\minval$ at the decoder cannot increase the probability of
error.  Finally, we assume that $\beta_j = \minval$ for all $j \in
\Sset$ to construct the most difficult possible instance within our
ensemble.  Thus, we can apply Fano's inequality to lower bound the
probability of error in the restricted problem, and so obtain a lower
bound on the probability of error for the general problem.  This
procedure yields the lower bounds $\myboundone$ and $\spboundone$ in
Theorems~\ref{ThmDenseNew} and~\ref{ThmSparseNew} respectively.  \\

\myparagraph{Restricted ensemble B:}
The second restricted ensemble is designed to capture the confusable
effects of the relatively small number $(\pdim - \kdim +1)$ of very
close-by subsets (see Figure~\ref{FigRestrictedB}).  This restricted
ensemble is defined as follows.  Suppose that the decoder is given the
locations of all but the smallest non-zero value of the vector
$\beta$, as well as the values of $\beta$ on its support.  More
precisely, let $\jstar$ denote the unknown location of the smallest
non-zero value of $\beta$, which we assume achieves the minimum (i.e.,
$\beta_{\jstar} = \betamin$), and let $T = S \setminus \{\jstar \}$.
Given knowledge of $(T, \beta_T, \minval)$, the decoder may simply
subtract $X_T \beta_T = \sum_{j \in T} X_j \beta_j$ from $Y$, so that
it is left with the modified $\numobs$-vector of observations
\begin{eqnarray}
\label{EqnModObs}
   \Ytil & \defn & X_{\jstar} \minval + W.
\end{eqnarray}
By re-ordering indices as need be, we may assume without loss of
generality that $T = \{\pdim-\kdim+2, \ldots, \pdim\}$, so that
\mbox{$\jstar \in \{1, \ldots, p-k+1\}$.}  The remaining sub-problem
is to determine, given the observations $\Ytil$, the location of the
single non-zero.  Note that when we assume that the support of $\beta$
is uniformly chosen over all ${\pdim \choose \kdim}$ possible subsets
of size $\kdim$, then given $T$, the location of the remaining
non-zero is uniformly distributed over $\{1, \ldots, p-k+1\}$.

We will now argue that analyzing the probability of error of this
restricted problem gives us a lower bound on the probability of error
in the original problem.  Let $\betatil \in \R^{p-k+1}$ be a vector
with exactly one non-zero.  We can augment $\betatil$ with $k-1$
non-zeros at the end to obtain a $p$-dimensional vector.  If a decoder
can recover the support of any $p$-dimensional $k$-sparse vector
$\beta$, then it can recover the support of the augmented $\betatil$,
and hence the support of $\betatil$.  Similarly, providing the decoder
with side information about the non-zero values of $\beta$ cannot
increase the probability of error.  As before, we can apply Fano's
inequality to lower bound the probability of error in this restricted
problem, thereby obtaining the lower bounds $\myboundtwo$ and
$\spboundtwo$ in Theorems~\ref{ThmDenseNew} and~\ref{ThmSparseNew}
respectively.

\subsection{Proof of Theorem~\ref{ThmDenseNew}}

In this section, we derive the necessary conditions $\myboundone$ and
$\myboundtwo$ in Theorem~\ref{ThmDenseNew} for the general class of
measurement matrices, by applying Fano's inequality to bound the
probability of decoding error in restricted problems A and B,
respectively.  \\

\subsubsection{Applying Fano to restricted ensemble A}

We first perform our analysis of the error probability for a
particular instance of the random measurement matrix $X$, and
subsequently average over the ensemble of matrices.  Let $\Omega$
denote a random subset chosen uniformly at random over all ${\pdim
\choose \kdim}$ subsets $\Sset \subset \{1, \ldots, p\}$ of size
$\kdim$.  The probability of decoding error, for a given $X$, can be
lower bounded by Fano's inequality as
\begin{eqnarray*}   
   p_{err}(X) &\geq& \frac{H(\Omega | Y) - 1}{\log{\pdim \choose \kdim}}
   \;\; = \;\; 1 - \frac{I(\Omega;Y) + 1}{\log{\pdim \choose \kdim}}
\end{eqnarray*}
where we have used the fact that $H(\Omega|Y) = H(\Omega)-I(\Omega;Y)
= \log{\pdim \choose \kdim}-I(\Omega;Y)$.  Thus the problem is reduced
to upper bounding the mutual information $I(\Omega;Y)$ between the
random subset $\Omega$ and the noisy observations $Y$.  Since both $X$
and $\minval$ are known and fixed, the mutual information can be
expanded as
\begin{eqnarray*}
   I(\Omega;Y) &=& H(Y) - H(Y | \Omega)
   \;\; = \;\; H(Y) - H(W).
\end{eqnarray*}
We first bound the entropy of the observation vector $H(Y)$, using the
fact that differential entropy is maximized by the Gaussian
distribution with a matched variance.  More specifically, for a given
$X$, let $\Lambda(X)$ denote the covariance matrix of $Y$ conditioned
on $X$.  (Hence entry $\Lambda_{ii}(X)$ on the diagonal represents the
variance of $Y_i$.)  With this notation, the entropy of $Y$ can be
bounded as
\begin{eqnarray*}
   H(Y) &\leq& \sum_{i=1}^n H(Y_i) \\
   &\leq& \sum_{i=1}^n \frac{1}{2} \log(2\pi\e \; \Lambda_{ii}(X)).
\end{eqnarray*}
Next, the entropy of the Gaussian noise vector $W \sim N(0,I_{n \times
n})$ can be computed as $H(W) = \frac{n}{2} \log(2\pi\e)$.  Combining
these two terms, we then obtain the following bound on the mutual
information,
\begin{eqnarray*}
   I(\Omega;Y) &\leq& \sum_{i=1}^n \frac{1}{2} \log(\Lambda_{ii}(X)).
\end{eqnarray*}

With this bound on the mutual information, we now average the
probability of error over the ensemble of measurement matrices $X$.
Exploiting the concavity of the logarithm and applying Jensen's
inequality, the average probability of error can be bounded as
\begin{eqnarray} \label{EqnPerr}
   \Exs_X[p_{err}(X)] &\geq& 1 - \frac{\sum_{i=1}^n \frac{1}{2}
   \log(\Exs_X[\Lambda_{ii}(X)]) + 1}{\log{\pdim \choose \kdim}}.
\end{eqnarray}

It remains to compute the expectation $\Exs_X[\Lambda_{ii}(X)]$,
over the ensemble of matrices $X$ drawn with i.i.d. entries from any
distribution with zero-mean and unit variance.  The proof of the
following lemma involves some relatively straightforward but lengthy
calculation, and is given in Appendix~\ref{PfLemMoments}.
\begin{lemma}\label{LemMoments}
Given i.i.d. $X_{ij}$ with zero-mean and unit variance, the average
covariance is given by
\begin{eqnarray}
   \Exs_X[\Lambda(X)] &=& \left(1 + \kdim \minval^2 \left(1 -
   \frac{\kdim}{\pdim}\right)\right) I_{\numobs\times\numobs}.
\end{eqnarray}
\end{lemma}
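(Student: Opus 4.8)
The plan is to expand the observation coordinate-wise, compute the conditional covariance of $Y$ for a fixed matrix $X$ (where the remaining randomness is in the support $\Omega$ and the noise $W$), and then average over $X$ using only the first two moments. Writing $Z_j \defn \mathbb{I}[j \in \Omega]$ for the indicator that index $j$ lies in the random support, each coordinate of the observation vector in restricted ensemble A is $Y_i = \minval \sum_{j=1}^{\pdim} Z_j X_{ij} + W_i$. Conditioned on $X$, the only randomness is in the indicators $\{Z_j\}$ and the noise $W \sim N(0, I_{\numobs \times \numobs})$, which are independent, so the conditional covariance decomposes as
\[
\Lambda_{ii'}(X) = \minval^2 \sum_{j,j'} \operatorname{Cov}(Z_j, Z_{j'}) \, X_{ij} X_{i'j'} + \delta_{ii'},
\]
where the final term is the contribution of $W$ and the cross term between $W$ and the signal part vanishes by independence of the noise.

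First I would compute the covariance structure of the subset indicators. Because $\Omega$ is a uniformly random $\kdim$-subset, i.e. sampling without replacement, one has $\operatorname{Var}(Z_j) = \frac{\kdim}{\pdim}(1 - \frac{\kdim}{\pdim})$ and, for $j \neq j'$,
\[
\operatorname{Cov}(Z_j, Z_{j'}) = \frac{\kdim(\kdim-1)}{\pdim(\pdim-1)} - \frac{\kdim^2}{\pdim^2} = -\frac{\kdim(\pdim-\kdim)}{\pdim^2(\pdim-1)},
\]
reflecting the negative correlation induced by the fixed-cardinality constraint.

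Next I would average over $X$. Since the entries are i.i.d. with $\Exs[X_{ij}] = 0$ and $\operatorname{Var}(X_{ij}) = 1$, we have $\Exs_X[X_{ij} X_{i'j'}] = \delta_{ii'}\,\delta_{jj'}$. Hence, in $\Exs_X[\Lambda_{ii'}(X)]$, only the terms with $i = i'$ and $j = j'$ survive the double sum, annihilating all off-diagonal covariance contributions. Consequently the off-diagonal entries vanish and $\Exs_X[\Lambda(X)]$ is a scalar multiple of the identity, with diagonal value
\[
\Exs_X[\Lambda_{ii}(X)] = \minval^2 \sum_{j=1}^{\pdim} \operatorname{Var}(Z_j) + 1 = \minval^2 \, \pdim \cdot \frac{\kdim}{\pdim}\Bigl(1 - \frac{\kdim}{\pdim}\Bigr) + 1 = 1 + \kdim \minval^2 \Bigl(1 - \frac{\kdim}{\pdim}\Bigr),
\]
which is exactly the claimed expression.

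The main obstacle is purely bookkeeping: one must carefully track which expectation is taken over which source of randomness, since the inner covariance $\Lambda(X)$ marginalizes over $\Omega$ and $W$ with $X$ held fixed, whereas the outer expectation is over $X$. The one genuinely nontrivial ingredient is the without-replacement covariance of the indicators $\{Z_j\}$; everything else follows from the independence and moment assumptions, and it is the cancellation of all off-diagonal terms under $\Exs_X$ that forces the averaged covariance to be isotropic.
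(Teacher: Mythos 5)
Your proof is correct, but it organizes the key computation differently from the paper. The paper works at the level of support sets: it writes $\Lambda = \Exs[\Ybar\Ybar^T] - \mu\mu^T$, averages each term separately over $X$, and is then forced to evaluate the sum $\sum_{\Sset,\Uset}|\Sset\cap\Uset|$ over pairs of subsets by counting, for each $\Sset$, the number of subsets $\Uset$ with overlap $\lambda$ and invoking Vandermonde's identity to arrive at $\frac{\kdim^2\minval^2}{\pdim}$. Your indicator formulation $Y_i = \minval\sum_j Z_j X_{ij} + W_i$ absorbs that entire combinatorial step into the single elementary identity $\operatorname{Var}(Z_j) = \frac{\kdim}{\pdim}\bigl(1-\frac{\kdim}{\pdim}\bigr)$: the paper's $(1+\kdim\minval^2)$ term corresponds to $\minval^2\sum_j \Exs[Z_j]$, and its Vandermonde sum corresponds to $\minval^2\sum_j \Exs[Z_j]^2$, so the two proofs carry out the same second-moment-minus-squared-mean calculation, but yours sidesteps the subset-overlap counting entirely. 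One remark: the cross-covariances $\operatorname{Cov}(Z_j,Z_{j'})$ for $j\neq j'$, which you single out as ``the one genuinely nontrivial ingredient,'' are in fact never used --- under $\Exs_X$ they are multiplied by $\Exs_X[X_{ij}X_{i'j'}]=0$ and annihilated, exactly like the off-diagonal $(i\neq i')$ entries. Only the Bernoulli marginal variance of each $Z_j$ survives, so the negative correlation induced by sampling without replacement plays no role in the final formula, and your argument is even simpler than you claim.
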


Finally, combining Lemma~\ref{LemMoments} with
equation~\eqref{EqnPerr}, we obtain that the average probability
of error is bounded away from zero if
\begin{eqnarray*}
   n & < &\frac{\log{p \choose k}-1}{\frac{1}{2} \log\left(k\minval^2
   \left(1 - \frac{k}{p}\right) + 1\right)},
\end{eqnarray*}
as claimed.
\\

\subsubsection{Applying Fano to restricted ensemble B}

The analysis of restricted ensemble B is completely analogous to the
proof for restricted ensemble A, so we will only outline the key steps
below.  Let $\Omega$ denote a random variable with uniform
distribution over the indices $\{1, \ldots, \pdim-\kdim+1\}$.  The
probability of decoding error, for a given measurement matrix $X$, can
be lower bounded by Fano's inequality as
\begin{eqnarray*}
   p_{err}(X) &\geq& 1 - \frac{I(\Omega;\Ytil) + 1}{\log(\pdim-\kdim+1)}
\end{eqnarray*}
As before, the key problem of bounding the mutual information
$I(\Omega;\Ytil)$ between the random index $\Omega$ and the modified
observation vector $\Ytil$, can be reduced to bounding the entropy
$H(\Ytil)$.  For each fixed $X$, let $\Lambda(X)$ denote the
covariance matrix of $\Ytil$.  Since the differential entropy of
$\Ytil_i$ is upper bounded by the entropy of a Gaussian distribution
with variance $\Lambda_{ii}(X)$, we obtain the following bound on the
mutual information
\begin{eqnarray*}
   I(\Omega;\Ytil) &=& H(\Ytil) - \frac{n}{2} \log(2\pi\e) \\
   &\leq& \sum_{i=1}^n \frac{1}{2} \log(\Lambda_{ii}(X)).
\end{eqnarray*}
Applying Jensen's inequality, we can then bound the average
probability of error, averaged over the ensemble of measurement
matrices $X$, as
\begin{eqnarray} \label{EqnPerrRP2}
   \Exs_X[p_{err}(X)] &\geq& 1 - \frac{\sum_{i=1}^n \frac{1}{2}
   \log(\Exs_X[\Lambda_{ii}(X)]) + 1}{\log(\pdim-\kdim+1)}.
\end{eqnarray}
The proof of Lemma~\ref{LemMomentsRP2} below follows the same steps as
the derivation of Lemma~\ref{LemMoments}, and is omitted.
\begin{lemma}\label{LemMomentsRP2}
Given i.i.d. $X_{ij}$ with zero-mean and unit variance, the average
covariance is given by
\begin{eqnarray}
   \Exs_X[\Lambda(X)] &=& \left(1 + \minval^2 \left(1 -
   \frac{1}{\pdim-\kdim+1}\right)\right) I_{\numobs\times\numobs}.
\end{eqnarray}
\end{lemma}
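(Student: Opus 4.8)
The plan is to mirror the derivation of Lemma~\ref{LemMoments}, now working with the modified observation vector $\Ytil = X_{\jstar}\minval + W$ of restricted ensemble~B, where $\jstar$ is uniform over $\{1,\ldots,\pdim-\kdim+1\}$ and $W \sim N(0, I_{\numobs\times\numobs})$ is independent of $\jstar$. Writing $M = \pdim-\kdim+1$ for brevity, the first step is to note that, for a fixed measurement matrix $X$, the two sources of randomness in $\Ytil$ are the random index $\jstar$ and the noise $W$. Since these are independent, the covariance splits as $\Lambda(X) = \minval^2\,\mathrm{Cov}_{\jstar}(X_{\jstar}) + I_{\numobs\times\numobs}$, so the entire computation reduces to the covariance of a uniformly sampled column $X_{\jstar}$ drawn from the first $M$ columns of $X$.

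The second step is to write this column covariance explicitly. Since $\jstar$ is uniform, $\Exs_{\jstar}[X_{\jstar}] = \bar{X}$, where $\bar{X} = \frac{1}{M}\sum_{j=1}^{M} X_j$ is the empirical mean column, and hence $\mathrm{Cov}_{\jstar}(X_{\jstar}) = \frac{1}{M}\sum_{j=1}^{M} X_j X_j^T - \bar{X}\,\bar{X}^T$. The only place where any care is needed is the mean-subtraction term $\bar{X}\,\bar{X}^T$: this is exactly what produces the $-\frac{1}{M}$ correction in the final expression, in the same way that its analogue in ensemble~A yields the $-\frac{\kdim}{\pdim}$ term. Everything else is second-moment bookkeeping.

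The third step is to average over the ensemble of matrices $X$, whose entries are i.i.d.\ with zero mean and unit variance. For a diagonal entry, $\Exs_X\big[\frac{1}{M}\sum_{j} X_{ij}^2\big] = 1$ because each $X_{ij}$ has unit variance, while $\Exs_X[\bar{X}_i^2] = \frac{1}{M^2}\sum_{j,j'}\Exs[X_{ij}X_{ij'}] = \frac{1}{M}$, since the cross terms vanish by independence and zero mean; subtracting gives $\Exs_X[\Lambda_{ii}(X)] = 1 + \minval^2(1 - \frac{1}{M})$. For an off-diagonal entry indexed by distinct rows $a \neq b$, both $\Exs_X[\frac{1}{M}\sum_j X_{aj}X_{bj}]$ and $\Exs_X[\bar{X}_a \bar{X}_b]$ vanish by independence across rows, so the averaged matrix is diagonal. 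Assembling these pieces yields $\Exs_X[\Lambda(X)] = \big(1 + \minval^2(1 - \frac{1}{\pdim-\kdim+1})\big)\,I_{\numobs\times\numobs}$, as claimed. I expect no genuine obstacle here, since the calculation is routine; the only thing to keep straight is the distinction between averaging over $\jstar$, which creates the mean-subtraction term, and the subsequent average over $X$, which turns that term into the $\frac{1}{M}$ correction.
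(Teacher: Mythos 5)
Your proof is correct and follows essentially the same route as the paper: the paper omits this proof, noting it follows the same steps as Lemma~\ref{LemMoments}, namely writing $\Lambda(X)$ as the mixture second moment minus $\mu\mu^T$ for fixed $X$ and then averaging over the zero-mean, unit-variance i.i.d.\ entries, which is exactly your decomposition $\Lambda(X) = \minval^2\bigl(\frac{1}{M}\sum_j X_j X_j^T - \bar{X}\bar{X}^T\bigr) + I$. The only difference is cosmetic: the Vandermonde counting argument of Lemma~\ref{LemMoments} degenerates here to the trivial identity $\Exs_X[\bar{X}\bar{X}^T] = \frac{1}{M} I$, as you observe.
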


Finally, combining Lemma~\ref{LemMomentsRP2} with the Fano
bound~\eqref{EqnPerrRP2}, we obtain that the average probability of
error is bounded away from zero if
\begin{eqnarray*}
   n &<& \frac{\log (\pdim - \kdim +1) - 1}{\frac{1}{2} \log(1+
\minval^2 (1-\frac{1}{\pdim-\kdim+1}))}
\end{eqnarray*}
as claimed.

\subsection{Proof of Theorem~\ref{ThmSparseNew}}

This section contains proofs of the necessary conditions in
Theorem~\ref{ThmSparseNew} for the $\spgam$-sparsified Gaussian
measurement ensemble~\eqref{EqnGamSparse}.  We proceed as before,
applying Fano's inequality to restricted problems A and B, in order to
derive the conditions $\spboundone$ and $\spboundtwo$, respectively.
\\

\subsubsection{Analyzing restricted ensemble A}

In analyzing the probability of error in restricted ensemble A, the
initial steps proceed as in the proof of Theorem~\ref{ThmDenseNew},
first bounding the probability of error for a fixed instance of the
measurement matrix $X$, and later averaging over the
$\spgam$-sparsified Gaussian ensemble~\eqref{EqnGamSparse}.  Let
$\Omega$ denote a random subset uniformly distributed over the ${\pdim
\choose \kdim}$ possible subsets $\Sset \subset \{1, \ldots, p\}$ of
size $\kdim$.  As before, the probability of decoding error, for each
fixed $X$, can be lower bounded by Fano's inequality as
\begin{eqnarray*}
   p_{err}(X) &\geq& 1 - \frac{I(\Omega;Y)+1}{\log{p \choose k}}.
\end{eqnarray*}
We can similarly bound the mutual information
\begin{eqnarray*}
   I(\Omega;Y) &=& H(Y) - H(W)\\ 
   &\leq& \sum_{i=1}^n H(Y_i) - \frac{n}{2}\log(2\pi\e),
\end{eqnarray*}
using the Gaussian entropy for $W \sim N(0, I_{\numobs\times\numobs})$.

From this point, the key subproblem is to compute the entropy of $Y_i
=\sum_{j\in\Sset} X_{ij} \minval + W_i$.  To characterize the limiting
behavior of the random variable $Y_i$, note that $Y_i$ is distributed
according to the density defined as
\begin{eqnarray*}
   \psi_1(y,i; X) &=& \frac{1}{{p \choose k}} \sum_{S}
   \frac{1}{\sqrt{2\pi}} \exp{\left(-\frac{1}{2} (y-\minval \sum_{j
   \in S} X_{ij})^2\right)}.
\end{eqnarray*}
For each fixed matrix $X$, this density is a mixture of Gaussians with
unit variances and means that depend on the values of
$\{X_{i1},\ldots,X_{ip}\}$, summed over subsets $S \subset
\{1,\ldots,p\}$ with $|S|=k$.  At a high-level, our immediate goal is
to characterize the entropy $H(\psi_1)$.

Note that as $X$ varies over the ensemble~\eqref{EqnGamSparse}, the
sequence $\{\psi_1(\cdot \,; X) \}_p$, indexed by the signal dimension
$\pdim$, is actually a sequence of random densities.  As an
intermediate step, the following lemma characterizes the average
pointwise behavior of this random sequence of densities, and is proven
in Appendix~\ref{PfLemUstat}.
\begin{lemma}\label{LemUstat}
Let $X$ be drawn with i.i.d. entries from the $\spgam$-sparsified
Gaussian ensemble~\eqref{EqnGamSparse}.  For each fixed $y$ and for
all $i = 1,\ldots,\numobs$, $\; \Exs_X[\psi_1(y,i;X)] = \psibar_1(y)$, where
\begin{eqnarray} \label{EqnUstat}
   \psibar_1(y) &=& \Exs_L \left[ \frac{1}{\sqrt{2\pi(1+\frac{L
   \minval^2}{\spgam})}} \exp{\left(-\frac{y^2}{2 (1+\frac{L
   \minval^2}{\spgam}) }\right)} \right]
\end{eqnarray}
is a mixture of Gaussians with binomial weights \mbox{$L \sim
\Bin(\kdim,\spgam)$.}
\end{lemma}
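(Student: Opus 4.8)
The plan is to compute the pointwise expectation $\Exs_X[\psi_1(y,i;X)]$ directly by pushing the expectation inside the finite sum over subsets $\Sset$ and exploiting the i.i.d.\ structure of the sparsified ensemble. Fix the row index $i$ and the point $y$. Since $\psi_1(y,i;X) = \frac{1}{{\pdim \choose \kdim}} \sum_\Sset \frac{1}{\sqrt{2\pi}} \exp(-\frac{1}{2}(y - \minval \sum_{j\in\Sset} X_{ij})^2)$, linearity of expectation gives $\Exs_X[\psi_1(y,i;X)] = \frac{1}{{\pdim \choose \kdim}} \sum_\Sset \Exs_X\bigl[\frac{1}{\sqrt{2\pi}} \exp(-\frac{1}{2}(y - \minval Z_\Sset)^2)\bigr]$, where $Z_\Sset \defn \sum_{j\in\Sset} X_{ij}$. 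The crucial observation is that because the entries $X_{ij}$ are i.i.d., the law of $Z_\Sset$ depends only on the cardinality $|\Sset| = \kdim$, not on which indices $\Sset$ contains. Hence every one of the ${\pdim \choose \kdim}$ summands is identical, the average collapses, and I would be left with a single expectation $\Exs_X[\frac{1}{\sqrt{2\pi}} \exp(-\frac{1}{2}(y - \minval Z)^2)]$ for $Z = \sum_{j=1}^{\kdim} X_{ij}$ a sum of $\kdim$ i.i.d.\ sparsified-Gaussian variables.

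The next step is to identify the distribution of $Z$. Each $X_{ij}$ is, with probability $\spgam$, an $N(0,\frac{1}{\spgam})$ variate and, with probability $1-\spgam$, exactly zero; equivalently $X_{ij} = B_j G_j$ where $B_j \sim \Ber(\spgam)$ and $G_j \sim N(0,\frac{1}{\spgam})$ are independent. Conditioning on the number $L \defn \sum_{j=1}^\kdim B_j$ of nonzero entries among the $\kdim$ terms, which is $\Bin(\kdim,\spgam)$ distributed, the conditional law of $Z$ given $L = \ell$ is the sum of $\ell$ independent $N(0,\frac{1}{\spgam})$ variables, namely $N(0, \frac{\ell}{\spgam})$. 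I would therefore condition on $L$, so that $Z \mid L \sim N(0, \frac{L}{\spgam})$.

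With this representation in hand, the computation reduces to a Gaussian convolution. Conditioned on $L = \ell$, the random variable $\minval Z$ is $N(0, \frac{\ell\minval^2}{\spgam})$, and the quantity $\frac{1}{\sqrt{2\pi}}\exp(-\frac{1}{2}(y - \minval Z)^2)$ is precisely the value at $y$ of a unit-variance Gaussian density centered at $\minval Z$. Taking the expectation over $\minval Z \sim N(0, \frac{\ell\minval^2}{\spgam})$ convolves this unit-variance Gaussian with an $N(0,\frac{\ell\minval^2}{\spgam})$ density, and the standard fact that convolving independent Gaussians adds variances yields an $N(0, 1 + \frac{\ell\minval^2}{\spgam})$ density evaluated at $y$, i.e.\ $\phi(0, 1+\frac{\ell\minval^2}{\spgam})$ in the notation of the paper. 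Averaging over $L \sim \Bin(\kdim,\spgam)$ then produces exactly $\Exs_L[\frac{1}{\sqrt{2\pi(1+L\minval^2/\spgam)}} \exp(-\frac{y^2}{2(1+L\minval^2/\spgam)})] = \psibar_1(y)$, matching the definition in equation~\eqref{EqnDensity1}.

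I expect the only genuinely substantive step to be justifying the Gaussian convolution cleanly—that is, verifying $\Exs_{U}[\frac{1}{\sqrt{2\pi}}\exp(-\frac{1}{2}(y-U)^2)] = \frac{1}{\sqrt{2\pi(1+v)}}\exp(-\frac{y^2}{2(1+v)})$ for $U \sim N(0,v)$—though this is a routine Gaussian integral and could alternatively be phrased as adding the variances of the independent summands $W_i + \minval Z$. The collapse of the subset average and the $\Bin(\kdim,\spgam)$ identification of $L$ are essentially bookkeeping once the independence and exchangeability of the entries are noted. The one point requiring mild care is that the exchangeability argument must use only that the $X_{ij}$ are i.i.d.\ across $j$ for fixed $i$, which holds by construction of the ensemble, so that all ${\pdim\choose\kdim}$ terms are genuinely equal in expectation and the $\frac{1}{{\pdim\choose\kdim}}$ prefactor cancels exactly.
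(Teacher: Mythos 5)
Your proposal is correct, and it follows the paper's proof in its overall architecture: both arguments reduce the average over the ${\pdim \choose \kdim}$ subsets to a single subset by exchangeability of the i.i.d.\ entries, and both then condition on the number $L \sim \Bin(\kdim,\spgam)$ of non-zero entries of row $i$ within that subset, so that $\minval \sum_{j \in \Sset} X_{ij}$ is conditionally $N(0, \frac{\ell \minval^2}{\spgam})$ given $L = \ell$. Where you part ways is the evaluation of the resulting conditional expectation. The paper notes that, given $L = \ell$, the standardized square $\tilde{Z} = \frac{\spgam}{\ell \minval^2}\,(y - \minval \sum_{j} X_{ij})^2$ is a noncentral chi-square variate with one degree of freedom and noncentrality parameter $\frac{\spgam y^2}{\ell \minval^2}$, and it reads off the answer from the moment-generating function $M_{\tilde{Z}}(-\frac{\ell \minval^2}{2\spgam})$, for which it cites an external formula. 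You instead interpret the integrand $\frac{1}{\sqrt{2\pi}} \exp(-\frac{1}{2}(y - \minval Z)^2)$ as a unit-variance Gaussian density centered at $\minval Z$, so that averaging over $\minval Z \sim N(0, \frac{\ell \minval^2}{\spgam})$ is a convolution of independent Gaussian densities, and variances simply add, giving the $N(0, 1 + \frac{\ell \minval^2}{\spgam})$ density evaluated at $y$. The two computations are of course equivalent, but yours is more elementary and self-contained: it avoids introducing the noncentral chi-square law and its MGF, and it makes transparent the probabilistic content of the lemma, namely that $\Exs_X[\psi_1(\cdot, i; X)]$ is nothing but the unconditional density of $Y_i = \minval \sum_{j \in \Sset} X_{ij} + W_i$ given $|\Sset| = \kdim$. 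Your explicit justification of the exchangeability collapse (that the law of $\sum_{j \in \Sset} X_{ij}$ depends on $\Sset$ only through its cardinality) is the step the paper compresses into the phrase ``by symmetry,'' so on that point your write-up is, if anything, more careful than the original.
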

For certain scalings, we can use concentration results for
$U$-statistics~\cite{Serfling80} to prove that $\psi_1$ converges
uniformly to $\psibar_1$, and from there that $H(\psi_1)
\stackrel{p}{\rightarrow} H(\psibar_1)$.  In general, however, we
always have an upper bound, which is sufficient for our purposes.
Indeed, since differential entropy $H(\psi_1)$ is a concave function
of $\psi_1$, by Jensen's inequality and Lemma~\ref{LemUstat}, we have
\begin{eqnarray*}
   \Exs_X [H(\psi_1)] & \leq & H \left(\Exs_X[\psi_1] \right) \; = \;
   H(\psibar_1).
\end{eqnarray*}

With these ingredients, we conclude that the average error probability
of any decoder, averaged over the sparsified Gaussian measurement
ensemble, is lower bounded by
\begin{eqnarray*}
   \Exs_X[p_{err}(X)] & \geq & 1 - \frac{\sum_{i=1}^n \Exs_X[H(Y_i)] -
   \frac{n}{2} \log(2\pi\e) + 1}{\log{p \choose k}}\\
   &=& 1 - \frac{\sum_{i=1}^n \Exs_X[H(\psi_1)] - \frac{n}{2} \log(2\pi\e) +
   1}{\log{p \choose k}}\\
   &\geq& 1 - \frac{n H(\psibar_1) - \frac{n}{2} \log(2\pi\e) +
   1}{\log{p \choose k}}.
\end{eqnarray*}
Therefore, the probability of decoding error is bounded away from zero
if
\begin{eqnarray*}
   \numobs &<& \frac{\log{\pdim \choose \kdim} - 1}{H(\psibar_1) -
   \frac{1}{2} \log(2\pi\e)},
\end{eqnarray*}
as claimed.
\\

\subsubsection{Analyzing restricted ensemble B}

The analysis of restricted ensemble B mirrors exactly the derivation
of restricted ensemble A.  Hence we only outline the key steps in this
section.  Letting $\Omega \sim \operatorname{Uni}\{1, \ldots,
\pdim-\kdim+1\}$, we again apply Fano's inequality to restricted
problem B, using the sparse measurement ensemble~\eqref{EqnGamSparse}:
\begin{eqnarray*}
   p_{err}(X) &\geq& 1 - \frac{I(\Omega;\Ytil) + 1}{\log(\pdim-\kdim+1)}.
\end{eqnarray*}
In order to upper bound $I(\Omega;\Ytil)$, we need to upper bound the
entropy $H(\Ytil)$.  The sequence of densities associated with
$\Ytil_i$ becomes
\begin{eqnarray*}
   \psi_2(y,i;X) &=& \frac{1}{\pdim-\kdim+1}
   \sum_{j=1}^{\pdim-\kdim+1} \frac{1}{\sqrt{2\pi}}
   \exp\left(-\frac{1}{2} (y - \minval X_{ij})^2\right).
\end{eqnarray*}
Lemma~\ref{LemUstatRP2} below characterizes the average pointwise
behavior of these densities, and follows from the proof of
Lemma~\ref{LemUstat}, with $\Sset$ taken to be subsets of the indices
$\{1,\ldots,\pdim-\kdim+1\}$ of size $|S|=1$.
\begin{lemma}\label{LemUstatRP2}
Let $X$ be drawn with i.i.d. entries according
to~\eqref{EqnGamSparse}.  For each fixed $y$ and for all
$i=1,\ldots,\numobs$, $\; \Exs_X[\psi_2(y,i;X)] = \psibar_2(y)$, where
\begin{eqnarray} \label{EqnUstatRP2}
   \psibar_2(y) &=& \Exs_B\left[\frac{1}{\sqrt{2\pi
   (1+\frac{B\minval^2}{\spgam})}} \exp \left(-\frac{y^2}{2
   (1+\frac{B\minval^2}{\spgam})}\right)\right]
\end{eqnarray}
is a mixture of Gaussians with Bernoulli weights $B \sim
\Ber(\spgam)$.
\end{lemma}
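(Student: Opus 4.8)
The plan is to establish Lemma~\ref{LemUstatRP2} as the specialization of Lemma~\ref{LemUstat} to the case $|\Sset|=1$, and equivalently to verify it by a short direct computation. First I would exploit linearity of expectation together with the fact that the entries $X_{ij}$ are drawn i.i.d.\ from the ensemble~\eqref{EqnGamSparse}. Writing
\[
   \Exs_X[\psi_2(y,i;X)] \; = \; \frac{1}{\pdim-\kdim+1}
   \sum_{j=1}^{\pdim-\kdim+1} \Exs_X\left[\frac{1}{\sqrt{2\pi}}
   \exp\left(-\tfrac{1}{2}(y-\minval X_{ij})^2\right)\right],
\]
each summand depends only on the single entry $X_{ij}$, and all entries share a common marginal law, so every summand equals a common value and the prefactor $\frac{1}{\pdim-\kdim+1}$ cancels the sum. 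This reduces the claim to evaluating the expectation of a unit-variance Gaussian kernel at a single sparsified entry.

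Next I would condition on the Bernoulli indicator $B \sim \Ber(\spgam)$ that governs whether the entry is active. On the event $B=0$ the entry equals zero, the kernel collapses to $\phi(0,1)$ evaluated at $y$, and this matches the mixture term with variance $1+\frac{B\minval^2}{\spgam}=1$. On the event $B=1$ the entry is distributed as $N(0,\tfrac{1}{\spgam})$, and the remaining work is the Gaussian convolution
\[
   \Exs_{X\sim N(0,1/\spgam)}\left[\frac{1}{\sqrt{2\pi}}
   \exp\left(-\tfrac{1}{2}(y-\minval X)^2\right)\right],
\]
which I would evaluate by recognizing it as the density at $y$ of the sum of two independent centered Gaussians with variances $1$ and $\frac{\minval^2}{\spgam}$ (the latter arising from $\minval X$ with $X\sim N(0,\tfrac{1}{\spgam})$), namely $\phi\bigl(0,1+\frac{\minval^2}{\spgam}\bigr)$. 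Averaging these two cases over the law of $B$ produces precisely the Bernoulli mixture $\psibar_2(y)$ in~\eqref{EqnUstatRP2}.

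Because the proof of Lemma~\ref{LemUstat} carries out exactly this convolution but aggregated over the count $L\sim\Bin(\kdim,\spgam)$ of active entries inside a size-$\kdim$ support, the present statement follows upon setting the support size to one, so that the binomial count degenerates to a single indicator, $\Bin(1,\spgam)=\Ber(\spgam)$. I do not anticipate any genuine obstacle: the only substantive ingredient is the standard Gaussian convolution identity, and the reduction to a single entry is immediate from the i.i.d.\ structure. The one point requiring care is tracking the variance rescaling $\frac{1}{\spgam}$ of the active entries imposed by the sparsification~\eqref{EqnGamSparse}, which is exactly what yields the $\frac{\minval^2}{\spgam}$ term inside the mixture rather than $\minval^2$.
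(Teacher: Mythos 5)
Your proposal is correct, and its high-level route is the one the paper takes: the paper disposes of Lemma~\ref{LemUstatRP2} in one line, by specializing the proof of Lemma~\ref{LemUstat} to supports of size $|\Sset|=1$ drawn from $\{1,\ldots,\pdim-\kdim+1\}$, so that the binomial mixture label degenerates to $\Bin(1,\spgam)=\Ber(\spgam)$, exactly as you observe. The only genuine difference is in how the key single-entry average is evaluated. The paper's proof of Lemma~\ref{LemUstat} conditions on the number of active entries and then computes $\Exs_X\bigl[\tfrac{1}{\sqrt{2\pi}}\exp\bigl(-\tfrac{1}{2}(y-\minval\sum_j X_{ij})^2\bigr)\bigr]$ by recognizing the scaled squared deviation as a noncentral chi-square variate with one degree of freedom and invoking its moment-generating function. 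You instead identify the conditional expectation directly as the density at $y$ of the independent sum $W+\minval X$ with $W\sim N(0,1)$ and $\minval X\sim N(0,\minval^2/\spgam)$, i.e.\ a Gaussian convolution, yielding $\phi(0,1+\minval^2/\spgam)$ evaluated at $y$. The two computations are equivalent, but yours is more elementary and transparent, requiring no special-function identity; indeed, the same convolution argument applied conditionally on $L=\ell$ active entries would streamline the proof of Lemma~\ref{LemUstat} itself, giving $\phi(0,1+\ell\minval^2/\spgam)$ immediately. Your reduction to a single entry via the i.i.d.\ symmetry (which cancels the $\tfrac{1}{\pdim-\kdim+1}$ prefactor) and your tracking of the $1/\spgam$ variance rescaling both match what the paper's argument implicitly does.
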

As before, we can apply Jensen's inequality to obtain the bound
\begin{equation*}
\Exs_X[H(\psi_2)] \; \leq \; H(\Exs_X[\psi_2]) \; = \; H(\psibar_2).
\end{equation*}
The necessary condition then follows by the Fano bound on the
probability of error.

\subsection{Proof of Corollary~\ref{Cor3Regimes}}

In this section, we derive bounds on the expressions $\spboundone$ and
$\spboundtwo$ in Theorem~\ref{ThmSparseNew}.  We begin by noting that
the Gaussian mixture distribution $\psibar_1$ defined
in~\eqref{EqnDensity1} is a strict generalization of the distribution
$\psibar_2$ defined in~\eqref{EqnDensity2}; moreover, setting the
parameter $\kdim=1$ in $\psibar_1$ recovers $\psibar_2$.  The variance
associated with the mixture distribution $\psibar_1$ is equal to
$\sigma_1^2 = 1 + \kdim\minval^2$, and so the entropy of $\psibar_1$
is always bounded by the entropy of a Gaussian distribution with
variance $\sigma_1^2$, as
\begin{eqnarray*}
   H(\psibar_1) &\leq& \frac{1}{2} \log(2\pi\e (1 + \kdim\minval^2)).
\end{eqnarray*}
Similarly, the mixture distribution $\psibar_2$ has variance equal to
$1 + \minval^2$, so that the entropy associated with $\psibar_2$ can
in general be bounded as
\begin{eqnarray*}
   H(\psibar_2) &\leq& \frac{1}{2} \log(2\pi\e (1 + \minval^2)).
\end{eqnarray*}
This yields the first set of bounds in~\eqref{EqnRegimeInfty}.

Next, to derive more refined bounds which capture the effects of
measurement sparsity, we will make use of the following lemma (which
is proven in Appendix~\ref{PfLemEntropy}) to bound the entropy
associated with the mixture distribution $\psibar_1$:
\begin{lemma}\label{LemEntropy}
For the Gaussian mixture distribution $\psibar_1$ defined
in~\eqref{EqnDensity1},
\begin{eqnarray*}
   H(\psibar_1) &\leq& \Exs_L\left[\frac{1}{2} \log\left( 2\pi\e
   \left(1+\frac{L\minval^2}{\spgam}\right) \right)\right] + H(L),
\end{eqnarray*}
where $L \sim \Bin(\kdim,\spgam)$.
\end{lemma}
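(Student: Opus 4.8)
The plan is to introduce an auxiliary discrete label recording which mixture component generated the observation, and then exploit the standard bound expressing the entropy of a mixture in terms of the conditional entropy of its components plus the entropy of the mixing law. Concretely, let $L \sim \Bin(\kdim, \spgam)$, and let $Y$ be generated in two stages: first draw $L$, then draw $Y \mid \{L = \ell\} \sim N\!\left(0, 1 + \frac{\ell \minval^2}{\spgam}\right)$. By construction the marginal density of $Y$ is exactly $\psibar_1$ from~\eqref{EqnDensity1}, so $H(\psibar_1) = H(Y)$, and it suffices to bound $H(Y)$.

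First I would apply the chain rule for the joint entropy of the mixed discrete--continuous pair $(Y, L)$ in two different orders:
\begin{eqnarray*}
H(Y, L) &=& H(L) + H(Y \mid L) \;\; = \;\; H(Y) + H(L \mid Y).
\end{eqnarray*}
Rearranging gives $H(Y) = H(L) + H(Y \mid L) - H(L \mid Y)$. Since $L$ is a discrete random variable, its conditional entropy given $Y$ is nonnegative, $H(L \mid Y) \geq 0$, and therefore $H(Y) \leq H(L) + H(Y \mid L)$.

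It then remains to evaluate the two surviving terms. The conditional entropy decomposes as $H(Y \mid L) = \Exs_L[H(Y \mid L)]$, and for each value $L = \ell$ the conditional law of $Y$ is Gaussian with variance $1 + \frac{\ell \minval^2}{\spgam}$, whose differential entropy is $\frac{1}{2}\log\!\left(2\pi\e\,(1 + \frac{\ell\minval^2}{\spgam})\right)$. This yields $H(Y \mid L) = \Exs_L\!\left[\frac{1}{2}\log\!\left(2\pi\e\,(1 + \frac{L\minval^2}{\spgam})\right)\right]$, while $H(L) = H(\Bin(\kdim,\spgam))$ is precisely the entropy of the binomial mixing weights. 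Combining the two terms gives the claimed inequality.

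The only delicate point is the chain-rule step for a pair that mixes a continuous variable $Y$ with a discrete variable $L$; I would justify it by noting directly from the definitions that $H(Y,L) = \Exs_L[H(Y \mid L)] + H(L)$, and separately that $H(L \mid Y) \geq 0$ because discrete conditional entropies are always nonnegative (unlike differential entropies, which can be negative). The attraction of this route is that a direct attack on $H(\psibar_1)$—integrating $-\psibar_1 \log \psibar_1$ against the Gaussian-mixture density—has no closed form, whereas the auxiliary-label decomposition replaces that intractable integral with the elementary Gaussian entropy formula and the binomial entropy $H(L)$.
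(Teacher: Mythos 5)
Your proof is correct and is essentially the same as the paper's: the paper expands the mutual information $I(Z;L) = H(Z) - H(Z|L) = H(L) - H(L|Z)$ (which is just your chain-rule identity rearranged), uses $H(L|Z) \geq 0$ for the discrete label, and evaluates $H(Z|L)$ via the Gaussian entropy formula, exactly as you do.
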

We can further bound the expression in Lemma~\ref{LemEntropy} in three
cases, delineated by the quantity $\spgam\kdim$.  The proof of the
following claim in given in Appendix~\ref{PfLemExsBin}.
\begin{lemma} \label{LemExsBin}
Let $E = \Exs_L \left[ \frac{1}{2} \log \left( 1 +
\frac{L\minval^2}{\spgam} \right) \right]$, where $L \sim
\Bin(\kdim,\spgam)$.
\begin{enumerate}
  \item[(a)] If $\spgam\kdim > 3$, then
  \begin{eqnarray*}
    \frac{1}{4} \log\left(1 + \frac{\kdim\minval^2}{3}\right) &\leq& E
    \;\;\leq\;\; \frac{1}{2} \log\left(1+\kdim\minval^2\right).
  \end{eqnarray*}
  \item[(b)] If $\spgam\kdim = \tau$ for some constant $\tau$, then
  \begin{eqnarray*}
    \frac{1}{2} (1 - \e^{-\tau}) \log\left(1 +
    \frac{\kdim\minval^2}{\tau}\right) &\leq& E \;\;\leq\;\;
    \frac{1}{2} \tau \log\left(1+\frac{\kdim\minval^2}{\tau}\right).
  \end{eqnarray*}
  \item[(c)] If $\spgam\kdim \leq 1$, then
  \begin{eqnarray*}
    \frac{1}{4} \spgam\kdim \log\left(1 + \frac{\minval^2}{\spgam}
    \right) &\leq& E \;\;\leq\;\; \frac{1}{2} \spgam\kdim \log\left(1
    + \frac{\minval^2}{\spgam}\right).
  \end{eqnarray*}
\end{enumerate}
\end{lemma}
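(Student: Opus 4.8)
The plan is to handle all six bounds through a single pair of structural facts about the function $f(\ell) \defn \frac{1}{2}\log\!\left(1 + \frac{\ell\minval^2}{\spgam}\right)$, so that $E = \Exs_L[f(L)]$ with $L \sim \Bin(\kdim,\spgam)$ and $\Exs_L[L] = \spgam\kdim$. The facts I will exploit are that $f$ is nonnegative, increasing, and concave on $[0,\infty)$ with $f(0)=0$. Everything reduces to either Jensen's inequality, an elementary logarithm estimate, or a binomial tail bound.

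For the upper bounds I would use two generic estimates and keep whichever is sharper in each regime. First, concavity together with Jensen's inequality gives $E \le f(\Exs_L[L]) = \frac{1}{2}\log(1+\kdim\minval^2)$, which is exactly the upper bound in case (a). Second, the elementary inequality $1 + \ell x \le (1+x)^\ell$ for integer $\ell \ge 0$ and $x \ge 0$ yields $f(\ell) \le \ell\, f(1)$, hence $E \le \Exs_L[L]\,f(1) = \frac{1}{2}\spgam\kdim\log\!\left(1+\frac{\minval^2}{\spgam}\right)$; substituting $\frac{\minval^2}{\spgam} = \frac{\kdim\minval^2}{\tau}$ when $\spgam\kdim=\tau$ recovers the upper bounds claimed in cases (b) and (c).

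For the lower bounds the common device is to discard all but a favorable event: since $f\ge 0$ and $f$ is increasing, for any threshold $t$ we have $E \ge f(t)\,\mprob[L \ge t]$, which reduces each case to lower-bounding a binomial tail. In cases (b) and (c) I take $t=1$, so that $\mprob[L\ge 1] = 1-(1-\spgam)^\kdim$. The bound $1-x\le \e^{-x}$ gives $1-(1-\spgam)^\kdim \ge 1-\e^{-\tau}$ for case (b), while for case (c) the hypothesis $\spgam\kdim \le 1$ makes the terms ${\kdim \choose j}\spgam^j$ of the binomial expansion decreasing, so truncating the alternating series after two terms gives $1-(1-\spgam)^\kdim \ge \spgam\kdim - {\kdim \choose 2}\spgam^2 \ge \frac{1}{2}\spgam\kdim$, using $(\kdim-1)\spgam \le \spgam\kdim \le 1$. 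Combined with $f(1)=\frac{1}{2}\log(1+\frac{\minval^2}{\spgam})$, these produce precisely the claimed constants $\frac{1}{2}(1-\e^{-\tau})$ and $\frac{1}{4}$.

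The hard part is the lower bound in case (a), where $L$ is no longer a rare event and a genuine concentration statement is needed. The natural choice is $t=\spgam\kdim/3$, for which $f(t) = \frac{1}{2}\log(1+\frac{\kdim\minval^2}{3})$ matches the target, so it suffices to show $\mprob[L\ge \spgam\kdim/3]\ge \frac{1}{2}$. The obstacle is that the naive two-sided Chebyshev bound is too weak here, certifying only probability $\tfrac14$ when $\spgam\kdim>3$; I would instead apply the one-sided (Cantelli) inequality $\mprob[L \le \Exs_L[L]-a] \le \frac{\var(L)}{\var(L)+a^2}$ with $a = \frac{2}{3}\spgam\kdim$. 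Since $\var(L) = \spgam\kdim(1-\spgam) \le \spgam\kdim$ and the bound is increasing in $\var(L)$, this yields $\mprob[L \le \spgam\kdim/3] \le \frac{9}{9+4\spgam\kdim} < \frac{1}{2}$ exactly because $\spgam\kdim>3$, so $\mprob[L\ge \spgam\kdim/3] > \frac{1}{2}$ and therefore $E \ge \frac{1}{2} f(\spgam\kdim/3) = \frac{1}{4}\log(1+\frac{\kdim\minval^2}{3})$, as claimed.
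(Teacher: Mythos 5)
Your proposal is correct, and the interesting divergence from the paper is precisely at the hard step you identified: the lower bound in case (a). The paper's upper bounds coincide with yours (Jensen via concavity for case (a); the inequality $(1+x)^\ell \geq 1+\ell x$, i.e.\ $f(\ell) \leq \ell f(1)$, for cases (b) and (c), which the paper executes by reindexing the binomial sum rather than simply invoking $\Exs_L[L]=\spgam\kdim$), and its lower bounds for (b) and (c) also truncate to the event $\{L \geq 1\}$, differing only in how the tail $1-(1-\spgam)^\kdim$ is bounded below: the paper uses $(1-\spgam)^\kdim \leq \e^{-\spgam\kdim}$ followed by $\e^{-x} \leq 1-\tfrac{x}{2}$ on $[0,1]$, while you truncate the alternating binomial expansion; both give the factor $\tfrac{1}{2}\spgam\kdim$. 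For case (a), however, the paper does not use a concentration inequality at all: it invokes the known combinatorial fact that the median of $\Bin(\kdim,\spgam)$ lies in $\{\lfloor\spgam\kdim\rfloor-1, \lfloor\spgam\kdim\rfloor, \lfloor\spgam\kdim\rfloor+1\}$, truncates the expectation at $\ell = \lfloor\spgam\kdim\rfloor - 1$ (so the retained tail has mass at least $\tfrac{1}{2}$), and uses $\frac{\lfloor\spgam\kdim\rfloor-1}{\spgam} \geq \frac{\spgam\kdim-2}{\spgam} \geq \frac{\kdim}{3}$ when $\spgam\kdim > 3$. Your Cantelli argument reaches the same constant $\tfrac{1}{4}\log(1+\tfrac{\kdim\minval^2}{3})$ by showing $\mprob[L \geq \spgam\kdim/3] > \tfrac12$ directly, and your observation that two-sided Chebyshev certifies only $\tfrac14$ is accurate, so the one-sided refinement is genuinely needed. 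The trade-off: the paper's route is shorter once the median fact is granted, but that fact is an external, nontrivial result about binomial distributions; your route is self-contained, using only the mean and variance of $L$, and generalizes immediately to any nonnegative increasing $f$ and any distribution with comparable mean--variance scaling.
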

Finally, combining Lemmas~\ref{LemEntropy} and~\ref{LemExsBin} with
some simple bounds on the entropy of the binomial variate $L$ (given
in Appendix~\ref{AppBinEntropy}), we obtain the bounds on
$\spboundone$ in~\eqref{EqnRegimeConst} and~\eqref{EqnRegimeZero}.

We can similarly bound the entropy associated with the Gaussian
mixture distribution $\psibar_2$.  Since the density $\psibar_2$ is a
special case of the density $\psibar_1$ with $\kdim$ set to $1$, we
can again apply Lemma~\ref{LemEntropy} to obtain
\begin{eqnarray*}
   H(\psibar_2) &\leq& \Exs_B\left[\frac{1}{2} \log\left(2\pi\e
   \left(1+\frac{B\minval^2}{\spgam}\right)\right)\right] + H(B)\\
   &=& \frac{\spgam}{2} \log\left(1 + \frac{\minval^2}{\spgam}\right)
   + \Hbin(\spgam) + \frac{1}{2}\log(2\pi\e).
\end{eqnarray*}
We have thus obtained the bounds on $\spboundtwo$ in
equations~\eqref{EqnRegimeConst} and~\eqref{EqnRegimeZero}.

\section{Discussion}
\label{SecDiscuss}

In this paper, we have studied the information-theoretic limits of
exact support recovery for general scalings of the parameters
$(\numobs, \pdim, \kdim, \betamin, \spgam)$.  Our first result
(Theorem~\ref{ThmDenseNew}) applies generally to measurement matrices
with zero-mean and unit variance entries.  It strengthens previously
known bounds, and combined with known sufficient
conditions~\cite{Wainwright06_info}, yields a sharp characterization
of recovering signals with linear sparsity with a linear fraction of
observations (Corollary~\ref{Cor3Regimes}).  Our second result
(Theorem~\ref{ThmSparseNew}) applies to $\spgam$-sparsified Gaussian
measurement ensembles, and reveals three different regimes of
measurement sparsity, depending on how significantly they impair
statistical efficiency.  For linear signal sparsity,
Theorem~\ref{ThmSparseNew} is not a sharp result (by comparison to
Theorem~\ref{ThmDenseNew} in the dense case); however, its tightness
for sublinear signal sparsity is an interesting open problem.
Finally, Theorem~\ref{ThmDenseNew} implies that the standard Gaussian
ensemble is an information-theoretically optimal choice for the
measurement matrix: no other zero-mean unit variance distribution can
reduce the number of observations necessary for recovery, and in fact
the standard Gaussian distribution achieves matching sufficient
bounds~\cite{Wainwright06_info}.  This fact raises an interesting open
question on the design of other, more computationally friendly,
measurement matrices which are optimal in the information-theoretic
sense.

\subsection*{Acknowledgment}  The work of WW and KR was
supported by NSF grant CCF-0635114. The work of MJW was supported by
NSF grants CAREER-CCF-0545862 and DMS-0605165.

\appendix

\section{Proof of Lemma \ref{LemMoments}}
\label{PfLemMoments}

We begin by defining some additional notation.  Let $\betabar \in
\R^{\pdim}$ be a $\kdim$-sparse vector with $\betabar_j = \minval$ for
all indices $j$ in the support set $\Sset$.  Recall that $\Omega$
denotes a random subset uniformly distributed over all ${\pdim \choose
\kdim}$ possible subsets $\Sset \subset \{1, \ldots, \pdim\}$ with
$|\Sset|=\kdim$.  Conditioned on the event that $\Omega = \Sset$, the
vector of $\numobs$ observations can then be written as
\begin{eqnarray*}
   \Ybar &\defn& X_\Sset \betabar_\Sset + W 
   \;\; = \;\; \minval \sum_{j \in \Sset} X_j + W.
\end{eqnarray*}
Note that for a given instance of the matrix $X$, the distribution of
$\Ybar$ is a Gaussian mixture with density $f(y) = \myprob{1}
\sum_\Sset \phi(X_\Sset \betabar_\Sset, I)$, where we are using $\phi$
to denote the density of a Gaussian random vector with mean $X_\Sset
\betabar_\Sset$ and covariance $I$.  Let $\mu(X) = \mu \in
\R^{\numobs}$ and $\Lambda(X) = \Lambda \in \R^{\numobs \times
\numobs}$ be the mean vector and covariance matrix of $\Ybar$,
respectively.  The covariance matrix of $\Ybar$ can be computed as
$\Lambda = \Exs\left[\Ybar \Ybar^T\right] - \mu \mu^T$, where
\begin{eqnarray*}
   \mu &=& \myprob{1} \sum_\Sset X_\Sset \betabar_\Sset 
   \;\; = \;\; \myprob{\minval} \sum_\Sset \sum_{j\in\Sset} X_j
\end{eqnarray*}
and
\begin{eqnarray*}
   \Exs\left[\Ybar \Ybar^T\right] &=& \Exs\left[(X \betabar)
   (X \betabar)^T \right] + \Exs\left[W W^T\right]\\
   &=& \myprob{1} \sum_\Sset (X_\Sset \betabar_\Sset) (X_\Sset
   \betabar_\Sset)^T \; + \; I.
\end{eqnarray*}

With this notation, we can now compute the expectation of the
covariance matrix $\Exs_X\big[\Lambda\big]$, averaged over any
distribution on $X$ with independent, zero-mean and unit variance
entries.  To compute the first term, we have
\begin{eqnarray*}
   \Exs_X\left[ \Exs\left[\Ybar \Ybar^T\right] \right] &=&
   \myprob{\minval^2} \sum_\Sset \Exs_X\left[ \sum_{j \in \Sset} X_j
   X_j^T + \sum_{i \neq j \in \Sset} X_i X_j^T \right] \;\; + \;\; I
   \\
   &=& \myprob{\minval^2} \sum_\Sset \sum_{j \in \Sset} I \;\; + \;\; I \\
   &=& \left(1 + \kdim \minval^2\right) \; I
\end{eqnarray*}
where the second equality uses the fact that $\Exs_X\left[ X_j X_j^T
\right] = I$, and $\Exs_X\left[ X_i X_j^T \right] = 0$ for $i \neq j$.
Next, we compute the second term as,
\begin{eqnarray*}
   \Exs_X \left[ \mu \mu^T \right] &=& \left(\myprob{\minval}\right)^2
   \Exs_X \left[ \sum_{\Sset,\Uset} \sum_{j\in\Sset\cap\Uset} X_j
   X_j^T + \sum_{\Sset,\Uset} \sum_{i\in\Sset,j\in\Uset \atop i \neq
   j} X_i X_j^T \right] \\
   &=& \left(\myprob{\minval}\right)^2 \sum_{\Sset,\Uset}
   \sum_{j\in\Sset\cap\Uset} I \\
   &=& \left(\left(\myprob{\minval}\right)^2 \sum_{\Sset,\Uset} |\Sset
   \cap \Uset|\right) \; I.
\end{eqnarray*}
From here, note that there are ${\pdim \choose \kdim}$ possible
subsets $\Sset$.  For each $\Sset$, a counting argument reveals that
there are ${\kdim \choose \lambda} {\pdim-\kdim \choose
\kdim-\lambda}$ subsets $\Uset$ of size $\kdim$ which have $\lambda =
|\Sset \cap \Uset|$ overlaps with $\Sset$.  Thus the scalar
multiplicative factor above can be written as
\begin{eqnarray*}
   \left(\myprob{\minval}\right)^2 \sum_{\Sset,\Uset} |\Sset \cap
   \Uset| &=& \myprob{\minval^2} \sum_{\lambda=1}^\kdim {\kdim \choose
   \lambda} {\pdim-\kdim \choose \kdim-\lambda} \lambda.
\end{eqnarray*}
Finally, using a substitution of variables (by setting $\lambda' =
\lambda-1$) and applying Vandermonde's identity~\cite{Riordan}, we
have
\begin{eqnarray*}
   \left(\myprob{\minval}\right)^2 \sum_{\Sset,\Uset} |\Sset \cap
   \Uset| &=& \myprob{\minval^2} \; \kdim \sum_{\lambda'=0}^{\kdim-1}
   {\kdim-1 \choose \lambda'} {\pdim-\kdim \choose \kdim-\lambda'-1}
   \\
   &=& \myprob{\minval^2} \; \kdim {\pdim-1 \choose \kdim-1} \\
   &=& \frac{\kdim^2 \minval^2}{\pdim}.
\end{eqnarray*}

Combining these terms, we conclude that
\begin{eqnarray*}
   \Exs_X\big[\Lambda(X) \big] &=& \left(1 + \kdim \minval^2
   \left(1-\frac{\kdim}{\pdim}\right) \right) \; I.
\end{eqnarray*}

\section{Proof of Lemma \ref{LemUstat}}
\label{PfLemUstat}

Consider the following sequences of densities,
\begin{eqnarray*}
   \psi_1(y,i; X) &=& \frac{1}{{p \choose k}} \sum_{S}
   \frac{1}{\sqrt{2\pi}} \exp{\left(-\frac{1}{2} (y-\minval \sum_{j
   \in S} X_{ij})^2\right)}\\
\end{eqnarray*}
and
\begin{eqnarray*}
   \psibar_1(y) &=& \Exs_L \left[ \frac{1}{\sqrt{2\pi(1+\frac{L
   \minval^2}{\gamma})}} \exp{\left(-\frac{y^2}{2 (1+\frac{L
   \minval^2}{\gamma}) }\right)} \right]
\end{eqnarray*}
where $L \sim \Bin(\kdim, \gamma)$.  Our goal is to show that for each
fixed $y$ and row index $i$, the pointwise average of the stochastic
sequence of densities $\psi_1$ over the ensemble of matrices $X$
satisfies $\Exs_X[\psi_1(y,i;X)] = \psibar_1(y)$.  By symmetry, it is
sufficient to compute this expectation for the subset $\Sset = \{1,
\ldots, \kdim\}$.  When each $X_{ij}$ is i.i.d. drawn according to the
$\spgam$-sparsified ensemble~\eqref{EqnGamSparse}, the random variable
$Z = (y - \minval \sum_{j=1}^k X_{ij})$ has a Gaussian mixture
distribution which can be described as follows.  Denoting the mixture
label by $L \sim \Bin(\kdim, \gamma)$, then $Z \sim
N\left(y,\frac{\ell \minval^2}{\gamma}\right)$ if $L=\ell$, for
$\ell=0,\ldots,k$.  Thus, conditioned on the mixture label $L=\ell$,
the random variable $\tilde{Z} = \frac{\gamma}{\ell \minval^2} (y -
\minval \sum_{j=1}^k X_{ij})^2$ has a noncentral chi-square
distribution with $1$ degree of freedom and parameter $\lambda =
\frac{\gamma y^2}{\ell \minval^2}$.  Evaluating $M_{\tilde{Z}}(t) =
\Exs[\e^{t \tilde{Z}}]$, the moment-generating function~\cite{Birge01}
of $\tilde{Z}$, then gives us the desired quantity,
\begin{eqnarray*}
   && \Exs_X\left[\frac{1}{\sqrt{2\pi}} \exp\left(-\frac{1}{2} (y -
   \minval \sum_{j=1}^k X_{ij})^2\right)\right] \\
   && = \sum_{\ell=0}^k \frac{1}{\sqrt{2\pi}} \Exs_X\left[\left. \exp
   \left(-\frac{1}{2} (y - \minval \sum_{j=1}^k X_{ij})^2\right) \;
   \right| \; L = \ell \right] \mprob(L=\ell)\\
   && = \sum_{\ell=0}^k \frac{1}{\sqrt{2\pi}} M_{\tilde{Z}}
   \left(-\frac{\ell \minval^2}{2 \gamma}\right) \mprob(L=\ell) \\
   && = \Exs_L \left[ \frac{1}{\sqrt{2\pi (1 +
   \frac{L\minval^2}{\gamma})}} \exp{\left(-\frac{y^2}{2 (1 +
   \frac{L\minval^2}{\gamma}) }\right)} \right]
\end{eqnarray*}
as claimed.

\section{Proof of Lemma \ref{LemEntropy}}
\label{PfLemEntropy}

Let $Z$ be a random variable distributed according to the density
\begin{eqnarray*}
   \psibar_1(y) &=& \Exs_L \left[ \frac{1}{\sqrt{2\pi (1 +
   \frac{L\minval^2}{\gamma})}} \exp \left(-\frac{y^2}{2(1 +
   \frac{L\minval^2}{\gamma})} \right) \right],
\end{eqnarray*}
where $L \sim Bin(k,\gamma)$.  To compute the entropy of $Z$, we can
expand the following mutual information in two ways, $I(Z;L) = H(Z) -
H(Z|L) = H(L) - H(L|Z)$, and obtain
\begin{eqnarray*}
   H(Z) &=& H(Z|L) + H(L) - H(L|Z).
\end{eqnarray*}
The conditional distribution of $Z$ given that $L=\ell$ is Gaussian,
and so the conditional entropy of $Z$ given $L$ can be written as
\begin{eqnarray*}
   H(Z|L) &=& \Exs_L\left[ \frac{1}{2} \log\left( 2\pi\e \left(1 +
   \frac{L \minval^2}{\gamma}\right) \right) \right].
\end{eqnarray*}
Furthermore, we can bound the conditional entropy of $L$ given $Z$ as
$0 \leq H(L|Z) \leq H(L)$.  This gives upper and lower bounds on the
entropy of $Z$ as
\begin{eqnarray*}
   H(Z|L) &\leq& H(Z)  \;\;\leq\;\;  H(Z|L) + H(L).
\end{eqnarray*}

\section{Proof of Lemma~\ref{LemExsBin}}
\label{PfLemExsBin}

We first derive upper and lower bounds in the case when $\spgam\kdim
\leq 1$.  We can rewrite the binomial distribution as
\begin{eqnarray*}
   p(\ell) &\defn& {\kdim \choose \ell} \spgam^\ell (1-\spgam)^{\kdim-\ell} 
   \;\; = \;\; \frac{\spgam\kdim}{\ell} \; {\kdim-1 \choose \ell-1}
   \spgam^{\ell-1} (1-\spgam)^{\kdim-\ell}
\end{eqnarray*}
and hence
\begin{eqnarray*}
   E &=& \frac{1}{2} \sum_{\ell=1}^\kdim \log\left(1 +
   \frac{\ell\minval^2}{\spgam}\right) p(\ell) \\
   &=& \frac{1}{2} \spgam\kdim \sum_{\ell=1}^\kdim \frac{\log\left(1 +
   \frac{\ell\minval^2}{\spgam}\right)}{\ell} \; {\kdim-1 \choose
   \ell-1} \spgam^{\ell-1} (1-\spgam)^{\kdim-\ell}.
\end{eqnarray*}
Taking the first two terms of the binomial expansion of $\left(1 +
\frac{\minval^2}{\spgam}\right)^\ell$ and noting that all the terms
are non-negative, we obtain the inequality
\begin{eqnarray*}
   \left(1 + \frac{\minval^2}{\spgam}\right)^\ell &\geq& 1 +
   \frac{\ell\minval^2}{\spgam}
\end{eqnarray*}
and consequently $\log\left(1 + \frac{\minval^2}{\spgam}\right) \geq
\frac{1}{\ell} \log\left(1 + \frac{\ell\minval^2}{\spgam}\right)$.
Using a change of variables (by setting $\ell'=\ell-1$) and applying
the binomial theorem, we thus obtain the upper bound
\begin{eqnarray*} 
   E &\leq& \frac{1}{2} \spgam\kdim \sum_{\ell=1}^\kdim \log\left(1 +
   \frac{\minval^2}{\spgam}\right) {\kdim-1 \choose \ell-1}
   \spgam^{\ell-1} (1-\spgam)^{\kdim-\ell} \\
   &=& \frac{1}{2} \spgam\kdim \log\left(1 +
   \frac{\minval^2}{\spgam}\right) \sum_{\ell'=0}^{\kdim-1} {\kdim-1
   \choose \ell'} \spgam^{\ell'} (1-\spgam)^{\kdim-\ell'-1} \\
  &=& \frac{1}{2} \spgam\kdim \log\left(1 +
  \frac{\minval^2}{\spgam}\right).
\end{eqnarray*}

To derive the lower bound, we will use the fact that $1+x \leq \e^x$
for all $x\in\R$, and $\e^{-x} \leq 1 - \frac{x}{2}$ for $x \in
[0,1]$.  
\begin{eqnarray*}
   E &=& \frac{1}{2} \sum_{\ell=1}^\kdim \log \left(1 +
   \frac{\ell\minval^2}{\spgam}\right) p(\ell) \\
   &\geq& \frac{1}{2} \log\left(1 + \frac{\minval^2}{\spgam}\right)
   \sum_{\ell=1}^\kdim p(\ell) \\
   &=& \frac{1}{2} \log\left(1 + \frac{\minval^2}{\spgam}\right) (1 -
   (1-\spgam)^\kdim) \\
   &\geq& \frac{1}{2} \log\left(1 + \frac{\minval^2}{\spgam}\right) (1
   - \e^{-\spgam\kdim}) \\
   &\stackrel{(a)}{\geq}& \frac{1}{2} \log\left(1 +
   \frac{\minval^2}{\spgam}\right) \left(\frac{\spgam\kdim}{2}\right).
\end{eqnarray*}  

Next, we examine the case when $\spgam\kdim = \tau$ for some constant
$\tau$.  The derivation of the upper bound in the case when
$\spgam\kdim \leq 1$ holds for the $\spgam\kdim = \tau$ case as well.
The proof of the lower bound follows the same steps as in the
$\spgam\kdim \leq 1$ case, except that we stop before applying the
last inequality~$(a)$.

Finally, we derive bounds in the case when $\spgam\kdim > 3$.  Since
the mean of a $L \sim \Bin(\kdim,\spgam)$ random variable is
$\spgam\kdim$, by Jensen's inequality the following bound always
holds,
\begin{eqnarray*}
   \Exs_L\left[\frac{1}{2} \log\left(1 + \frac{L\minval^2}{\spgam}
   \right)\right] &\leq& \frac{1}{2} \log(1 + \kdim\minval^2).
\end{eqnarray*}
To derive a matching lower bound, we use the fact that the median of a
$\Bin(\kdim,\spgam)$ distribution is one of
$\{\lfloor\spgam\kdim\rfloor - 1, \lfloor\spgam\kdim\rfloor,
\lfloor\spgam\kdim\rfloor + 1\}$.
This allows us to bound
\begin{eqnarray*}
   E &\geq& \frac{1}{2} \sum_{\ell=\lfloor\spgam\kdim\rfloor-1}^\kdim
   \log\left(1 + \frac{\ell\minval^2}{\spgam}\right) p(\ell) \\
   &\geq& \frac{1}{2} \log\left(1 + \frac{(\lfloor\spgam\kdim\rfloor -
   1) \minval^2}{\spgam}\right) \sum_{\ell=\lfloor\spgam\kdim\rfloor -
   1}^\kdim p(\ell) \\
  &\geq& \frac{1}{4} \log\left(1 + \frac{\kdim\minval^2}{3}\right)
\end{eqnarray*}
where in the last step we used the fact that $\frac{(\lfloor \spgam
\kdim \rfloor - 1) \minval^2}{\spgam} \geq \frac{(\spgam \kdim - 2)
\minval^2}{\spgam} \geq \frac{\kdim\minval^2}{3}$ for $\spgam\kdim
> 3$, and $\sum_{\ell=\text{median}}^\kdim p(\ell) \geq
\frac{1}{2}$.

\section{Bounds on binomial entropy}
\label{AppBinEntropy}

\begin{lemma}
Let $L \sim \Bin(\kdim,\spgam)$.  Then 
\begin{eqnarray*}
H(L) &\leq& \kdim \Hbin(\spgam).
\end{eqnarray*}  
Furthermore, if $\spgam = o\left(\frac{1}{\kdim\log{\kdim}}\right)$,
then $\kdim \Hbin(\spgam) \rightarrow 0$ as $\kdim \rightarrow
\infty$.
\end{lemma}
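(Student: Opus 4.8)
The plan is to dispatch the two claims by independent arguments: a subadditivity-of-entropy argument for the inequality $H(L) \leq \kdim \Hbin(\spgam)$, and an elementary asymptotic analysis of the binary entropy function for the limit.

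For the inequality, I would use the representation $L = \sum_{i=1}^{\kdim} B_i$, where $B_1, \ldots, B_\kdim$ are i.i.d. $\Ber(\spgam)$ variates. Since $L$ is a deterministic function of the tuple $(B_1, \ldots, B_\kdim)$, monotonicity of (discrete) entropy under deterministic maps gives $H(L) \leq H(B_1, \ldots, B_\kdim)$, and independence yields $H(B_1, \ldots, B_\kdim) = \sum_{i=1}^{\kdim} H(B_i) = \kdim \Hbin(\spgam)$. Chaining these two facts gives the claim immediately.

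For the limiting statement, I would first reduce $\Hbin(\spgam)$ to its leading term. Expanding $\Hbin(\spgam) = -\spgam \log \spgam - (1-\spgam)\log(1-\spgam)$, the elementary inequality $-(1-\spgam)\log(1-\spgam) \leq \spgam$ (which follows since $x + (1-x)\log(1-x)$ is nondecreasing on $[0,1)$ and vanishes at $0$) gives
\begin{eqnarray*}
\kdim \Hbin(\spgam) &\leq& \kdim \spgam \log\tfrac{1}{\spgam} + \kdim \spgam .
\end{eqnarray*}
It then suffices to show both summands vanish under the hypothesis $\spgam = o\bigl(1/(\kdim \log \kdim)\bigr)$, noting that this forces $\spgam \in (0,1)$ eventually, so $\log(1/\spgam)$ is well defined.

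The second summand is easy, since $\kdim \spgam = o(1/\log\kdim) \to 0$. The first summand is where the real work lies, and is the step I expect to be the main obstacle: the factor $\log(1/\spgam)$ diverges and must be dominated by the smallness of $\spgam$. To control it I would set $t_\kdim \defn \spgam \kdim \log \kdim$, so that $t_\kdim \to 0$ by hypothesis, and substitute $\log(1/\spgam) = \log \kdim + \log\log\kdim + \log(1/t_\kdim)$ to obtain
\begin{eqnarray*}
\kdim \spgam \log\tfrac{1}{\spgam} &=& t_\kdim + \frac{t_\kdim \log\log\kdim}{\log\kdim} + \frac{t_\kdim \log(1/t_\kdim)}{\log \kdim} .
\end{eqnarray*}
The first term is $t_\kdim \to 0$; the second is $t_\kdim$ times the vanishing factor $\log\log\kdim / \log\kdim$; and for the third I would invoke the fact that $x \log(1/x)$ is bounded (by $1/\e$) on $(0,1]$, so the numerator stays bounded while $\log\kdim \to \infty$. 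Hence each term vanishes, giving $\kdim \Hbin(\spgam) \to 0$ and completing the argument.
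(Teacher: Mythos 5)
Your proposal is correct and follows essentially the same route as the paper: the inequality $H(L) \leq \kdim \Hbin(\spgam)$ via writing $L$ as a sum of i.i.d.\ $\Ber(\spgam)$ variables and using monotonicity of entropy under deterministic maps plus independence, and the limit via an elementary asymptotic analysis after reparametrizing the hypothesis $\spgam = o\bigl(1/(\kdim\log\kdim)\bigr)$. The only (minor) difference is in the term $-\kdim(1-\spgam)\log(1-\spgam)$: the paper computes its limit directly by writing it as $-\log\bigl(1-\tfrac{1}{\kdim f(\kdim)}\bigr)^{\kdim}$ plus a vanishing correction, whereas your inequality $-(1-\spgam)\log(1-\spgam)\leq\spgam$ dispatches it slightly more cleanly; both are valid.
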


\begin{proof}
We can express the binomial variate as $L=\sum_{i=1}^\kdim Z_i$, where
$Z_i \sim$ \Ber($\spgam$) i.i.d.  Since $H(g(Z_1,\ldots,Z_\kdim))
\leq H(Z_1,\ldots,Z_\kdim)$, we have
\begin{eqnarray*}
  H(L) &\leq& H(Z_1,\ldots,Z_\kdim) \;\;=\;\; \kdim \Hbin(\spgam).
\end{eqnarray*}

Next we find the limit of $\kdim \Hbin(\spgam) = \kdim\spgam
\log\frac{1}{\spgam} + \kdim (1-\spgam) \log\frac{1}{1-\spgam}$.  Let
$\spgam = \frac{1}{\kdim f(\kdim)}$, and assume that $f(\kdim)
\rightarrow \infty$ as $\kdim \rightarrow \infty$.  Hence the first
term can be written as
\begin{eqnarray*}
   \kdim\spgam \log\frac{1}{\spgam} &=& \frac{1}{f(\kdim)} \log(\kdim
   f(\kdim)) \;\;=\;\; \frac{\log \kdim}{f(\kdim)} + \frac{\log
   f(\kdim)}{f(\kdim)},
\end{eqnarray*}
and so $\kdim\spgam \log\frac{1}{\spgam} \rightarrow 0$ if
$f(k)=\omega(\log k)$.  The second term can also be expanded as
\begin{eqnarray*}
   -\kdim (1-\spgam) \log(1-\spgam) &=& - \kdim \log\left(1 -
   \frac{1}{\kdim f(\kdim)}\right) + \frac{1}{f(\kdim)} \log\left(1 -
   \frac{1}{\kdim f(\kdim)}\right) \\
   &=& - \log \left(1 - \frac{1}{\kdim f(\kdim)}\right)^\kdim +
   \frac{1}{f(\kdim)} \log \left(1 - \frac{1}{\kdim f(\kdim)}\right).
\end{eqnarray*} 
If $f(\kdim) \rightarrow \infty$ as $\kdim \rightarrow \infty$, then
we have the limits
\begin{equation*}
   \lim_{\kdim\rightarrow\infty} \left(1 - \frac{1}{\kdim
   f(\kdim)}\right)^\kdim \; = \; 1 \qquad \text{and} \qquad
   \lim_{\kdim\rightarrow\infty} \left(1 - \frac{1}{\kdim
   f(\kdim)}\right) \; = \; 1,
\end{equation*}
which in turn imply that
\begin{equation*}
   \lim_{\kdim\rightarrow\infty} \log \left(1 - \frac{1}{\kdim
   f(\kdim)}\right)^\kdim \; = \; 0 \qquad \text{and} \qquad
   \lim_{\kdim\rightarrow\infty} \frac{1}{f(\kdim)} \log \left(1 -
   \frac{1}{\kdim f(\kdim)}\right) \; = \; 0.
\end{equation*}
\end{proof}

\begin{lemma}
Let $L \sim \Bin(\kdim,\spgam)$, then 
\begin{eqnarray*}
  H(L) &\leq& \frac{1}{2}\log(2\pi\e (k\gamma(1-\gamma) + \frac{1}{12})).
\end{eqnarray*}
\end{lemma}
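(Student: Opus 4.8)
The plan is to bound the discrete entropy $H(L)$ by a differential entropy through a \emph{dithering} argument, and then to invoke the Gaussian maximum-entropy property. First I would introduce an auxiliary continuous random variable $Y = L + V$, where $V \sim \operatorname{Uni}[-\frac{1}{2},\frac{1}{2}]$ is drawn independently of $L$. Because $L$ is integer-valued, the density of $Y$ is piecewise constant, equal to $\mprob(L = \ell)$ on each unit-length cell $[\ell-\frac{1}{2}, \ell+\frac{1}{2}]$. Integrating the differential entropy cell by cell then shows that $H(Y) = -\sum_{\ell} \mprob(L=\ell) \log \mprob(L=\ell) = H(L)$; that is, adding unit-width uniform noise to an integer-valued variate leaves the entropy unchanged.

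Next I would control the variance of $Y$. By independence of $L$ and $V$, we have $\var(Y) = \var(L) + \var(V)$. A uniform variate on an interval of unit length has variance $\frac{1}{12}$, and $\var(L) = \kdim\spgam(1-\spgam)$ for $L \sim \Bin(\kdim,\spgam)$, so $\var(Y) = \kdim\spgam(1-\spgam) + \frac{1}{12}$.

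Finally, I would apply the fact that, among all continuous random variables with a prescribed variance, the Gaussian has maximal differential entropy. Applied to $Y$, this gives $H(Y) \leq \frac{1}{2}\log\bigl(2\pi\e \, \var(Y)\bigr)$. Chaining the three observations yields $H(L) = H(Y) \leq \frac{1}{2}\log\bigl(2\pi\e(\kdim\spgam(1-\spgam) + \frac{1}{12})\bigr)$, which is the claimed bound.

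The ingredients are all standard, and no step involves a delicate estimate. The one genuine idea is the dithering construction: by convolving the discrete law with unit-width uniform noise, the discrete entropy is realized exactly as a differential entropy whose variance exceeds $\var(L)$ by precisely $\frac{1}{12}$, which is what produces the additive $\frac{1}{12}$ correction in the bound. The only point requiring a moment's care is the identity $H(Y) = H(L)$, but this follows immediately from the piecewise-constant form of the density of $Y$.
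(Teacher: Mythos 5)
Your proof is correct and coincides with the paper's approach: the paper disposes of this lemma by citing the ``differential entropy bound on discrete entropy'' from Cover and Thomas, and your dithering argument (adding $\operatorname{Uni}[-\tfrac{1}{2},\tfrac{1}{2}]$ noise, noting the discrete entropy equals the resulting differential entropy, and invoking the Gaussian maximum-entropy bound with variance $\kdim\spgam(1-\spgam)+\tfrac{1}{12}$) is precisely the standard proof of that cited result, spelled out in full.
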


\begin{proof}
We immediately obtain this bound by applying the differential entropy
bound on discrete entropy~\cite{Cover}.
\end{proof}

\bibliographystyle{plain} \bibliography{mjwain_super}
\end{document}